\theoremstyle{plain}
\newtheorem{thm}{Theorem}[section]
\newtheorem{theorem}[thm]{Theorem}
\newtheorem{lemma}[thm]{Lemma}
\theoremstyle{definition}
\newtheorem{remark}[thm]{Remark}
\newtheorem{definition}[thm]{Definition}
\numberwithin{equation}{section}
\title[Monge-Ampere operators via pseudo-isomorphisms:  well-defined cases]{Complex Monge-Ampere operators via pseudo-isomorphisms: the well-defined cases}
\author{Tuyen Trung Truong}
\address{Department of Mathematics, Syracuse University, Syracuse NY 13244} \email{tutruong@syr.edu}
\thanks{}
\begin{document}

\maketitle

\begin{abstract}
Let $X$ and $Y$ be compact K\"ahler manifolds of dimension $3$. A bimeromorphic map $f:X\rightarrow Y$ is pseudo-isomorphic if $f:X-I(f)\rightarrow Y-I(f^{-1})$ is an isomorphism.  

Let $T=T^+-T^-$ be a current on $Y$, where $T^{\pm}$ are positive closed $(1,1)$ currents which are smooth outside a finite number of points. We assume that the following condition is satisfied:

{\bf Condition 1.} For every curve $C$ in $I(f^{-1})$, then in cohomology $\{T\}.\{C\}=0$.

Then, we define a natural push-forward $f_*(\varphi dd^cu\wedge f^*(T))$ for a quasi-psh function $u$ and a smooth function $\varphi$ on $Y$. We show that this pushforward satisfies a Bedford-Taylor's monotone convergence type.

Assume moreover that the following two conditions are satisfied

{\bf Condition 2.} The signed measure $T\wedge T\wedge T$ has no mass on $I(f^{-1})$.

{\bf Condition 3.} For every curve $C$ in $I(f^{-1})$, the measure $T\wedge [C]$ has no Dirac mass.
  
Then, we define a Monge-Ampere operator $MA(f^*(T))=f^*(T)\wedge f^*(T)\wedge f^*(T)$ for $f^*(T)$. We show that this Monge-Ampere operator satisfies several continuous properties, including a Bedford-Taylor's monotone convergence type when $T$ is positive. The measures $MA(f^*(T))$ are in general quite singular. Also, note that it may be not possible to define $f^*(T^{\pm})\wedge f^*(T^{\pm})\wedge f^*(T^{\pm})$. 
\end{abstract}

\section{Introduction}

Let $X$ and $Y$ be compact K\"ahler manifolds of dimension $3$. A bimeromorphic map $f:X\rightarrow Y$ is pseudo-isomorphic if the map $g=f|_{X-I(f)}:X-I(f)\rightarrow Y-I(f^{-1})$ is an isomorphism. Here $I(f)$ and $I(f^{-1})$ are the indeterminate sets of $f$ and $f^{-1}$, both have dimensions at most $1$. (In fact, Bedford-Kim \cite{bedford-kim} showed that if $I(f)$, and hence $I(f^{-1})$, is non-empty then it must be of pure dimension $1$). We let $\Gamma _g\subset (X-I(f))\times (Y-I(f^{-1}))$ be the graph of $g$, and $\Gamma _f=$ the closure of $\Gamma _g$ in $X\times Y$ the graph of $f$. Let $\pi _1,\pi _2:~X\times Y\rightarrow X,Y$ be the natural projections, and occasionally we use the same notations for the restrictions to $\Gamma _g,\Gamma _f$. 

Given a meromorphic map $f:X\rightarrow Y$ and a smooth closed $(1,1)$ form $\theta$ on $Y$, the pullback $f^*(\theta )$ is well-defined as a $(1,1)$ current, however, in general is singular on $I(f)$. To see an explicit example, consider the simple map $J:\mathbb{P}^3\rightarrow \mathbb{P}^3$ given by the formula $J[x_0:x_1:x_2:x_3]$ $=$ $[1/x_0:1/x_1:1/x_2:1/x_3]$. If $u$ is a smooth function then $J^*dd^cu$ will have singularities of the form $1/(x_k^2\overline{x_l}^2)$ near the curves of indeterminacy $x_i=x_j=0$. Therefore, a priori it is not clear whether we can define the Monge-Ampere operator $MA(J^*dd^cu)=J^*dd^cu\wedge J^*dd^cu\wedge J^*dd^cu$ in a reasonable manner.   

In \cite{truong}, we show that if $f:X\rightarrow Y$ is a pseudo-isomorphism in dimension $3$, and $\theta$ is a smooth closed $(1,1)$ form on $Y$ such that in cohomology $$\{\theta\}.\{C\}=0,$$ for every curve $C$ in $I(f^{-1})$, then we have a well-defined Monge-Ampere operator $MA(f^*(\theta ))$. For example, the map $J$ above is not yet pseudo-isomorphic, but if we let $X\rightarrow \mathbb{P}^3$ be the blowup at the $4$ points $e_0=[1:0:0:0]$, $e_1=[0:1:0:0]$, $e_2=[0:0:1:0]$, $e_3=[0:0:0:1]$, then the induced map $J_X$ is a pseudo-isomorphism. Hence, for any smooth function $u$ on $Y=X$, we can define the Monge-Ampere operator $J_X^*dd^cu$ in a reasonable and consistent way, even though as we saw above this $(1,1)$ current is quite singular. Note that if we write $dd^cu=\alpha ^+-\alpha ^-$, where $\alpha ^{\pm}$ are positive closed smooth $(1,1)$ forms, there may be no reasonable and consistent way to define the wedge products $J_X^*(\alpha ^{\pm})\wedge J_X^*(\alpha ^{\pm})\wedge J_X^*(\alpha ^{\pm})$. For such an intersection to be well-defined, we may want to show that $J_X^*(\alpha ^{\pm})$ have locally bounded potentials near $I(J_X)$. However, we have the following result
\begin{lemma}
Let $C$ be an irreducible curve in $I(J_X)$. Then $J_X(C)=D$ is another irreducible curve in $I(J_X)$. If $\omega $ is a positive closed smooth $(1,1)$ form on  $X$ such that $\{\omega\}.\{D\}>0$, then the local potentials of $J_X^*(\omega )$ are unbounded near $C$. 
\label{Lemma0}\end{lemma}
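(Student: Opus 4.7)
The plan is to argue by contradiction, transferring the question to the graph $\Gamma\subset X\times X$ of $J_X$, with projections $\pi_1,\pi_2:\Gamma\to X$. Suppose toward contradiction that $J_X^*(\omega)=dd^cu$ holds for some bounded psh function $u$ on a neighborhood $V\subset X$ of a generic point $p\in C$. I would set $\varphi:=u\circ\pi_1$, a bounded psh function on $\pi_1^{-1}(V)$, and consider its Bedford--Taylor current $dd^c\varphi$. Since $\pi_1$ is biholomorphic off its exceptional set and $\pi_2=J_X\circ\pi_1$ there, $dd^c\varphi$ coincides with $\pi_2^*(\omega)$ away from the divisor $F:=\pi_1^{-1}(C)$. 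By the pseudo-isomorphism structure in dimension $3$, $F$ is (locally near $p$) the irreducible exceptional divisor of the blow-up of $C$ in $X$; together with the support theorem for closed $(1,1)$-currents, this forces
\[
dd^c\varphi=\pi_2^*(\omega)+a\,[F]
\]
for a single real constant $a$.

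The next step is to pin down $a$ by testing this equation on a generic fiber $G\cong\BP^1$ of $\pi_1|_F$ over $p$. The Atiyah-flop-type description of $\Gamma$ at $C$ makes $\pi_2|_G:G\to D$ an isomorphism, so $(\pi_2)_*[G]=[D]$ and $\int_G\pi_2^*(\omega)=\{\omega\}\cdot\{D\}$; moreover $F\cdot G=\deg\bigl(N_{F/\Gamma}|_G\bigr)=-1$ from $N_{F/\Gamma}|_G\cong\sO_{\BP^1}(-1)$. On the other hand, because $\pi_1|_G$ is constant, $\varphi|_G\equiv u(p)$, and Bedford--Taylor's restriction theorem for bounded psh functions gives $\int_G dd^c\varphi=\int_G dd^c(\varphi|_G)=0$. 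Solving $0=\{\omega\}\cdot\{D\}-a$ yields $a=\{\omega\}\cdot\{D\}>0$. Consequently $dd^c\varphi\ge a\,[F]$, which forces the Lelong number of $\varphi$ along $F$ to be at least $a>0$; this contradicts the boundedness of $\varphi$, since bounded psh functions have vanishing Lelong numbers. Hence no bounded local potential $u$ can exist.

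The main obstacle is the intersection-theoretic step on the graph. Two ingredients must be assembled: (i) the Atiyah-flop local structure of a $3$-dimensional pseudo-isomorphism at $C$, which ensures $F=\pi_1^{-1}(C)$ is an irreducible $\BP^1$-bundle over $C$ with $F\cdot G=-1$ and $\pi_2|_G$ biholomorphic onto $D$; and (ii) Bedford--Taylor's restriction theorem for bounded psh functions, which legitimates $\int_G dd^c(u\circ\pi_1)=0$ despite $G$ lying in the $\pi_1$-exceptional locus (the polar set of $u\circ\pi_1$ is empty, so $G$ is admissible). Once both are in place the contradiction is immediate from the standard vanishing of Lelong numbers for bounded psh functions.
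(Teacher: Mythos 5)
Your argument is correct in outline, but it takes a genuinely different route from the paper, and one step needs to be phrased more carefully. The paper's proof is a three-line global cohomological argument: if the potentials were locally bounded near $C$, Bedford--Taylor would make $J_X^*(\omega)\wedge[C]$ a well-defined positive measure, whose total mass is $\{J_X^*(\omega)\}.\{C\}=\{\omega\}.(J_X)_*\{C\}=-\{\omega\}.\{D\}<0$, a contradiction; the only nontrivial input is the cohomological identity $(J_X)_*\{C\}=-\{D\}$, which is deferred to the last section of \cite{truong}. Your proof works locally on the graph instead: write $dd^c(u\circ\pi_1)=\pi_2^*(\omega)+a[F]$ by the support theorem, evaluate on a fiber $G$ to get $a=\{\omega\}.\{D\}>0$, and contradict the vanishing of Lelong numbers of bounded psh functions. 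This buys a strictly stronger conclusion (unboundedness near every generic point of $C$, not merely the impossibility of boundedness in a full neighborhood of $C$), at the cost of needing the explicit geometry of the graph; note that the data you use ($F\cdot G=-1$ together with $\pi_2|_G\cong D$) is exactly what yields $(J_X)_*\{C\}=(\pi_2)_*\pi_1^*\{C\}=-\{D\}$, so the two proofs rest on the same underlying computation.

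Two caveats. First, the ``Atiyah-flop-type description'' is not a consequence of being a pseudo-isomorphism in dimension $3$, as your wording suggests: for a general pseudo-isomorphism the divisor over $C$ need not be the exceptional divisor of a single blow-up, the normal bundle degree along $G$ need not be $-1$, and $\pi_2|_G$ need not have degree one. It is a specific property of $J_X$ at the strict transforms of the six lines (normal bundle $\mathcal{O}(-1)\oplus\mathcal{O}(-1)$ after blowing up the two points), which must be checked directly, just as the paper checks $J_X(C)=D$ and $(J_X)_*\{C\}=-\{D\}$ by direct computation in \cite{truong}; your proposal also leaves the first assertion of the lemma (that $D$ is again an indeterminacy curve) to this verification. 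Second, the step determining $a$ mixes current-theoretic and intersection-theoretic pairings: $[F]\wedge[G]$ is not a well-defined wedge of currents since $G\subset F$, so you cannot literally integrate the identity $dd^c\varphi=\pi_2^*(\omega)+a[F]$ over $G$ term by term. The clean fix is cohomological: the identity gives an equality of de Rham classes on the neighborhood, $\{dd^c\varphi\}=0$ because the current is exact, and pairing with the compact cycle $[G]$ yields $0=\int_G\pi_2^*(\omega)+a\deg\bigl(\mathcal{O}(F)|_G\bigr)=\{\omega\}.\{D\}-a$; the Bedford--Taylor restriction you invoke is then not even needed. With these two points repaired the argument is complete.
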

\begin{proof}
That $J_X(C)=D$ is an irreducible curve in $J_X(C)$ can be checked directly (see the last Section in \cite{truong}). Now we prove the claim about the unboundedness of the local potentials of $J_X^*(\omega )$ near $C$.  Assume otherwise. Then by Bedford-Taylor's results \cite{bedford-taylor}, the wedge intersection of currents $J_X^*(\omega )\wedge [C]$ is well-defined as a positive measure on $X$. In particular, in cohomology
\begin{eqnarray*}
0\leq \{J_X^*(\omega )\wedge [C]\}= \{J_X^*(\omega )\}.\{C\}=\{\omega \}.(J_X)_*\{C\}.
\end{eqnarray*}
However, we can check that in cohomology $(J_X)_*\{C\}=-\{D\}$ (see the last section in \cite{truong}). Hence we obtain
\begin{eqnarray*}
\{\omega \}.(J_X)_*\{C\}=-\{\omega \}.\{D\}<0.
\end{eqnarray*}
by assumption. This is a contradiction. 
\end{proof}

The purpose of this short note is to extend the Monge-Ampere operator $MA(f^*(T))=f^*(T)\wedge f^*(T)\wedge f^*(T)$ in \cite{truong} to currents $T$ which can be singular on a finite number of points. The points are allowed to be in $I(f^{-1})$. The main motivation for this is that given a psef cohomology class $\eta \in H^{1,1}(X)$, it may not be able to find a positive closed smooth form $\theta$ in that class, while if we allow a mild singularity there may be a positive closed $(1,1)$ current in the class of $\eta$ with that singularity. Moreover, if we allow more singularity for $T$, then the current $f^*(T)$ may be more singular and hence it makes it more difficult to define $MA(f^*(T))$.

We show that the Monge-Ampere operator so defined satisfies various continuous properties, see in particular Theorems  \ref{Theorem1} and \ref{Lemma1}, Lemmas \ref{Lemma?} and \ref{Lemma4}, and the last subsection of the paper. In the proof of the continuous properties, we will use the following approximation of positive closed smooth $(1,1)$ currents, due to Demailly \cite{demailly1}.

\begin{definition}
Let $Y$ be a compact K\"ahler manifold with a K\"ahler form $\omega _Y$. Let $T=\alpha +dd^cu$ be a positive closed $(1,1)$ current on $Y$, where $\alpha $ is a smooth closed $(1,1)$ form and $u$ is a quasi-psh function. Let $u_j$ be a sequence of smooth quasi-psh functions on $Y$ decreasing to $u$ such that $\alpha +dd^cu_j+\epsilon \omega _Y\geq 0$ for all $j$, here $\epsilon >0$ is a positive constant. Then we say that $\alpha +dd^cu_j$ is a good approximation of $T=\alpha +dd^cu$.
\label{Definition3}\end{definition}

Here we summarize the main results. 

\begin{theorem}
Let $f:X\rightarrow Y$ be a pseudo-isomorphism in dimension $3$. Let $T=T^+-T^-$ be a difference of two positive closed currents $(1,1)$ currents on $Y$, both are smooth outside a finite number of points. These points are allowed to be in $I(f^{-1})$.

Assume that for every curve $C$ in $I(f^{-1})$ we have in cohomology $\{T\}.\{C\}=0$.

We write $f^*(T)=\Omega +dd^cu$, where $\Omega $ is a smooth closed $(1,1)$ form and $u=u^+-u^-$ is a difference of two quasi-psh functions. 

1) (Bedford-Taylor's monotone convergence.) Let $u_j^{\pm}$ be smooth quasi-psh functions decreasing to $u^{\pm}$. Then for any smooth function $\varphi$ on $X$, the following limit exists
\begin{eqnarray*}
\lim _{j\rightarrow\infty}f_*(\varphi (\Omega +dd^cu_j^+-dd^cu_j^-)\wedge f^*(T)).
\end{eqnarray*} 
We denote the limit by $f_*(\varphi f^*(T)\wedge f^*(T))$.

2) Let $S$ be a smooth closed $(1,1)$ form on $Y$. Let $u_j^{\pm}$ be as in 1). Then 
\begin{eqnarray*}
\lim _{j\rightarrow\infty}\int _{X}\varphi f^*(S)\wedge (\Omega +dd^cu_j^+-dd^cu_j^-)\wedge f^*(T)=\int _YS\wedge f_*(\varphi f^*(T)\wedge f^*(T)).
\end{eqnarray*}

3) Assume further that $T$ satisfies the following two conditions:

i) The measure $T\wedge T\wedge T$ has no mass on $I(f^{-1})$.

ii) For each curve $C$ in $I(f^{-1})$ then the measure $T\wedge [C]$ has no Dirac mass. 

Then there is a natural and well-defined wedge intersection of currents $T\wedge f_*(\varphi f^*(T)\wedge f^*(T))$. In view of 2) above, we define the Monge-Ampere operator $MA(f^*(T))$ by the formula
\begin{eqnarray*}
<MA(f^*(T)),\varphi >:=\int _YT\wedge f_*(\varphi f^*(T)\wedge f^*(T)).
\end{eqnarray*}
 
4) Assumptions are as in 3). Assume further that $T$ is a positive current. We write $T=\alpha +dd^cv$, where $\alpha$ is a smooth closed $(1,1)$ form and $v$ is a quasi-psh function. Let $\alpha +dd^cv_n$ be a good approximation of $T=\alpha +dd^cv$, in the sense of Definition \ref{Definition3}. Then for any smooth function $\varphi$ we have
\begin{eqnarray*}
\lim _{n\rightarrow\infty}\int _Y(\alpha +dd^cv_n)\wedge f_*(\varphi f^*(T)\wedge f^*(T))=\int _YT\wedge f_*(\varphi f^*(T)\wedge f^*(T)).
\end{eqnarray*}
 
In other words, we have a double Bedford-Taylor's monotone type convergence 
\begin{eqnarray*}
\lim _{n\rightarrow\infty}\lim _{j\rightarrow\infty}\int _X\varphi f^*(\alpha +dd^cv_n)\wedge (\Omega +dd^cu_j) \wedge f^*(T)=\int _YT\wedge f_*(\varphi f^*(T)\wedge f^*(T)).
\end{eqnarray*}

5) Assumptions are as in 4). Assume moreover that $f^*(T\wedge T)$ has no mass on $I(f)$ (for example if $T$ is smooth near $I(f^{-1})$). If $\Omega +dd^cu_j$ is a good approximation of $f^*(T)=\Omega +dd^cu$ in the sense of Definition \ref{Definition3}, then 
\begin{eqnarray*}
\lim _{j\rightarrow\infty}\int _X\varphi f^*(T)\wedge (\Omega +dd^cu_j)\wedge f^*(T)=\int _YT\wedge f_*(\varphi f^*(T)\wedge f^*(T)).
\end{eqnarray*}

6) Assumptions are as in 3). If $T$ is smooth or positive then $MA(f^*(T))=f^*(T\wedge T\wedge T)$. Here, since the measure $T\wedge T\wedge T$ has no mass on $I(f^{-1})$, the pullback $f^*(T\wedge T\wedge T)$ is well-defined. 
\label{TheoremMain}\end{theorem}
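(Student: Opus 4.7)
The plan is to establish the six parts of Theorem \ref{TheoremMain} in sequence, extending the pseudo-isomorphism machinery of \cite{truong} (which treated smooth $T$) to the present setting where $T^{\pm}$ may have isolated singularities, possibly lying inside $I(f^{-1})$. The overall strategy is, for each identity, to transfer the integral from $X$ down to $Y$ via the projection formula (using that $f$ is an isomorphism off a codimension-$2$ set), reduce to a statement about classical Bedford-Taylor convergence of quasi-psh potentials against a fixed test current, and then invoke Condition 1 (and later Conditions 2--3) to rule out anomalous contributions along $I(f)$ or $I(f^{-1})$.

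For Part 1, I fix a smooth test $(1,1)$-form $\beta$ on $Y$ and use the projection formula to rewrite
\begin{eqnarray*}
\int_Y \beta \wedge f_*(\varphi(\Omega + dd^cu_j^+ - dd^cu_j^-) \wedge f^*(T)) = \int_X \varphi f^*(\beta) \wedge (\Omega + dd^cu_j^+ - dd^cu_j^-) \wedge f^*(T).
\end{eqnarray*}
The $\Omega$-piece is independent of $j$. For the $dd^cu_j^{\pm}$-pieces, integration by parts on the compact K\"ahler manifold $X$ (using that $f^*(T)$ is closed) moves $dd^c$ off $u_j^{\pm}$, yielding $\int_X u_j^{\pm}\,dd^c(\varphi f^*(\beta)) \wedge f^*(T)$. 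Decomposing $dd^c(\varphi f^*(\beta))$ as a difference of positive smooth forms on $X\setminus I(f)$ reduces convergence to two instances of Bedford-Taylor monotone convergence, $u_j^{\pm}\downarrow u^{\pm}$, against the closed positive $(1,1)$-currents $f^*(T^{\pm})$; Condition 1 is exactly what guarantees, as in the main technical lemma of \cite{truong}, that these positive forms extend across $I(f)$ with no contribution from the indeterminacy locus. The finite singular set of $T^{\pm}$ is handled by a standard cutoff argument, using that isolated points in a $3$-fold are negligible for Bedford-Taylor theory. Independence of the limit from the choice of $u_j^{\pm}$ is then standard. Part 2 follows directly by taking $\beta=S$.

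For Part 3, I would decompose the signed measure $\mu:=f_*(\varphi f^*(T) \wedge f^*(T))$ on $Y$, constructed in Part 1, into: (a) its restriction to $Y\setminus I(f^{-1})$, where $T$ is smooth outside finitely many points and $T\wedge\mu$ is classical; (b) the part carried by curves $C\subset I(f^{-1})$, where a local computation using the pseudo-isomorphism writes $\mu|_C$ as a combination of $[C]$ plus possible discrete masses, so that Condition 3 on $T\wedge [C]$ rules out Dirac collisions; (c) the part carried at isolated points in $I(f^{-1})$, where Condition 2 guarantees no hidden contribution from the self-intersection side. Gluing these pieces defines $T\wedge\mu$, hence $MA(f^*(T))$ by the formula in the statement. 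Part 4 is then a standard Bedford-Taylor monotone convergence downstairs on $Y$ for the finite signed measure $\mu$ paired against the good approximation $\alpha+dd^cv_n\to T$; the double limit follows by combining with Part 1. Part 5 is the analogous statement upstairs on $X$, where the extra hypothesis that $f^*(T\wedge T)$ has no mass on $I(f)$ is exactly what justifies interchanging the pullback with the intersection. Part 6 compares two a priori distinct definitions of $MA(f^*(T))$: on $Y\setminus I(f^{-1})$ they agree by pseudo-isomorphism, and Condition 2 ensures there is no residual contribution on $I(f^{-1})$, so the two measures coincide.

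The principal obstacle is the local analysis in Part 3: one must verify that after pushforward by $f$, the slices of $\mu$ along each curve $C\subset I(f^{-1})$ have the precise structure (a combination of $[C]$ and discrete masses with controlled locations) needed for Condition 3 to imply that $T\wedge\mu$ is unambiguous. This requires tracking how the singularities of $f^*(T)$ along $I(f)$ transform under $f_*$, using a suitable resolution of $f$ and the explicit dimension-$3$ structure of pseudo-isomorphisms; once this local picture is correctly set up, the rest of the theorem follows from the standard interplay between the pseudo-isomorphism formalism and Bedford-Taylor theory.
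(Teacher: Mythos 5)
Your reduction in Part 1 is where the first genuine gap lies. You pair against a smooth test form $\beta$, invoke a projection formula to pass to $\int_X\varphi f^*(\beta)\wedge(\Omega+dd^cu_j^+-dd^cu_j^-)\wedge f^*(T)$, and then integrate by parts to reach $\int_X u_j^{\pm}\,dd^c(\varphi f^*(\beta))\wedge f^*(T)$. But $f^*(\beta)$ is only an $L^1$ form with nonintegrable-square type singularities along $I(f)$ (see the discussion of $J_X$ in the introduction), and $f^*(T)$ has measure coefficients which can be very singular near $I(f)$; neither the product $f^*(\beta)\wedge f^*(T)$ nor $dd^c(\varphi f^*(\beta))\wedge f^*(T)$ is classically defined, so both the projection formula and the integration by parts at this level are identities of exactly the kind the theorem is supposed to establish, not tools one may assume. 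Similarly, "Bedford--Taylor monotone convergence of $u_j^{\pm}\downarrow u^{\pm}$ against $f^*(T^{\pm})$" presupposes that $u^{\pm}$ is locally integrable with respect to the trace measure of $f^*(T^{\pm})$ near $I(f)$; that is precisely the finite-mass theorem (Theorem 1.2 of \cite{truong}) for $\pi_1^*(uf^*(T))\wedge[\Gamma_g]$, which is the analytic engine behind the definition in Equation (\ref{Equation1}): one works on the graph, takes the extension by zero over $\Gamma_g$, and must separately control the components $[V_j]$ of $\pi_1^*(f^*(T))\wedge[\Gamma_f]$ lying over $I(f)$, which are disposed of by a dimension argument because $\pi_2(V_j)$ are points. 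Your statement that "Condition 1 is exactly what guarantees that these positive forms extend across $I(f)$" does not substitute for this: Condition 1 is a cohomological hypothesis on curves in $I(f^{-1})$ and enters only through that mass bound, not through any pointwise extension of forms, and without it your limit in Part 1 is not justified.

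The second gap is the one you flag yourself: the structure of $\mu=f_*(\varphi f^*(T)\wedge f^*(T))$ along $I(f^{-1})$, which is indispensable for Part 3. The paper obtains it (Lemma \ref{Lemma5}) by taking cluster points $R^{\pm}$ of the approximating positive currents on the graph, noting that $(\pi_2)_*(\pi_1^*(\varphi)R^{\pm})$ are DSH hence $\mathbb{C}$-flat, and applying Bassanelli's Federer-type support theorem plus a bidimension count to write $\mu=(f_*(\varphi)T\wedge T)^o+\sum_j(\chi_j^+-\chi_j^-)[C_j]$ with bounded densities $\chi_j^{\pm}$; only with this in hand do Conditions 2 and 3 allow the definition of $T\wedge\mu$ via the continuity statement of Lemma \ref{Lemma6}. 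Since you leave exactly this "local analysis" as an acknowledged obstacle (and propose a resolution-of-$f$ route you do not carry out), Part 3 is not established, and Parts 4--6 collapse with it. Note also that in Parts 4 and 5 the issue is not monotone convergence on $Y$ per se but showing that cluster points of the approximating signed measures put no mass on $I(f^{-1})$ and on the finite singular set $A$; the paper does this by dominating them by explicit positive measures such as $(\alpha+dd^cv_n+\epsilon\omega_Y)\wedge T\wedge T$ and $\epsilon\omega_Y\wedge T\wedge T$ and invoking Conditions 2--3 together with $f_*f^*=\mathrm{Id}$ from \cite{truong1}; and Part 6 additionally needs $f^*(T)\wedge f^*(T)=f^*(T\wedge T)$ from \cite{truong} and the identity $f_*(\varphi f^*(T)\wedge f^*(T))=f_*(\varphi)T\wedge T$ for positive $T$ (part 1 of Theorem \ref{Lemma1}), none of which appear in your outline.
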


\begin{remark}
In 6) of Theorem \ref{TheoremMain}, a priori the measure $f^*(T\wedge T\wedge T)$ is quite singular near $I(f)$, even if $T$ is smooth. Also, note that there may be no reasonable and consistent manner to define the terms $f^*(T^{\pm})\wedge f^*(T^{\pm})\wedge f^*(T^{\pm})$, so we need to define $f^*(T)\wedge f^*(T)\wedge f^*(T)$ directly. See Lemma \ref{Lemma0} and the discussion before it.
\end{remark}

\section{Definition of the Monge-Ampere operator}

We will consider the following class of currents
\begin{definition}
Class $(\mathcal{A})$. A closed $(1,1)$ current $T$ is in class $(\mathcal{A})$ if $T=T^{+}-T^{-}$ where $T^{\pm}$ are positive closed $(1,1)$ currents which are smooth outside a finite number of points. 
\label{Definition1}\end{definition}

\begin{remark}
The essential property that we need in the above definition is that in $W-A$, here $W$ is an open neighborhood of $I(f^{-1})$ and $A$ is a finite set, the currents $T^{\pm}$ are smooth (in fact, continuous is enough). Outside $W-A$, $T^{\pm}$ may have mild singularity such that $T\wedge T\wedge T$ is well-defined. For example, following Bedford-Taylor \cite{bedford-taylor}, we need only to require that $T^{\pm}$ have locally bounded potentials.
\end{remark}
 
\begin{remark}
That $T^{\pm}$ may have singular points on $I(f^{-1})$ makes it difficult to define the individual wedge products of currents $f^*(T^{\pm})\wedge f^*(T^{\pm})\wedge f^*(T^{\pm})$. This is because the preimage of a point on $I(f^{-1})$ may be a whole curve on $I(f)$. So a priori $f^*(T^{\pm})$ may be singular on a whole curve contained in $I(f)$, see for example Lemma \ref{Lemma0}. Hence, in the below, we will define $f^*(T)\wedge f^*(T)\wedge f^*(T)$ directly, not via the wedge products $f^*(T^{\pm})\wedge f^*(T^{\pm})\wedge f^*(T^{\pm})$. 
\end{remark} 
   
We will consider the following three conditions 

{\bf Condition 1.} For every curve $C$ in $I(f^{-1})$, then in cohomology $\{T\}.\{C\}=0$.

{\bf Condition 2.} The signed measure $T\wedge T\wedge T$ has no mass on $I(f^{-1})$.

{\bf Condition 3.} For every curve $C$ in $I(f^{-1})$, the measure $T\wedge [C]$ has no Dirac mass.

\begin{remark}If $T$ is in Class $(\mathcal{A})$, then the measure $T\wedge T\wedge T$ has no mass on $I(f^{-1})$, except possibly a finite number of points on $I(f^{-1})$ where $T$ is not smooth. Hence Condition 2 is equivalent to that $T\wedge T\wedge T$ has no Dirac masses at these points. 
\end{remark}

\begin{remark}
If $T$ is smooth then $T$ satisfies both Conditions 2 and 3.

If $T$ is a positive closed $(1,1)$ current in Class ($\mathcal{A}$) and satisfies Condition 1, then it automatically satisfies Condition 3. Because in this case the wedge product of currents $T\wedge [C]$ is well-defined as a positive measure, and the total mass is $\{T\}.\{C\}$. However, if $T$ is not positive then this implication is not automatic. 
\end{remark}

Assume that the Monge-Ampere operator $MA(f^*(T))=f^*(T)\wedge f^*(T)\wedge f^*(T)$ is well-defined. Then, formally, for a smooth function $\varphi$ on $X$ we have
\begin{equation}
\int _X\varphi f^*(T)\wedge f^*(T)\wedge f^*(T)=\int _YT\wedge f_*(\varphi f^*(T)\wedge f^*(T)),
\label{Equation0}\end{equation}
provided that both wedge intersections of currents $f_*(\varphi f^*(T)\wedge f^*(T))$ and $T\wedge f_*(\varphi f^*(T)\wedge f^*(T))$ are well-defined. The remaining of this note is to define these under the assumption that $T$ is in Class $\mathcal{A}$ and satisfies Conditions 1, 2 and 3. 

\begin{remark}[Justification for the approach.] Under Condition 1, we showed in \cite{truong} that $f^*(T)\wedge f^*(T)=f^*(T\wedge T)$, so one may attempt to define $MA(f^*(T))$ in a different way
\begin{equation}
\int _X\varphi f^*(T)\wedge f^*(T)\wedge f^*(T)=\int _Xf_*(\varphi f^*(T))\wedge T\wedge T.
\label{Equation?}\end{equation}

At a first look, this approach seems to have equal footing with our approach in Equation (\ref{Equation0}). To justify what approach is more reasonable, let us consider a more general problem. Assume that $S$ is another $(1,1)$ current which is smooth, and we want to define $f^*(S)\wedge f^*(T)\wedge f^*(T)$. 

Our approach in Equation (\ref{Equation0}) is to define
\begin{eqnarray*}
\int _X\varphi f^*(S)\wedge f^*(T)\wedge f^*(T):=\int _YS\wedge f_*(\varphi f^*(T)\wedge f^*(T)).
\end{eqnarray*}
The right hand side of the above expression is well-defined, since $S$ is smooth, provided that the current $f_*(\varphi f^*(T)\wedge f^*(T))$ is defined. Moreover, the equality is justified by proving a continuity property, see Lemma \ref{Lemma?} below. 

The approach in Equation \ref{Equation?} is to define either
\begin{eqnarray*}
\int _X\varphi f^*(S)\wedge f^*(T)\wedge f^*(T):=\int _Yf_*(\varphi f^*(S))\wedge T\wedge T,
\end{eqnarray*} 
or 
\begin{eqnarray*}
\int _X\varphi f^*(S)\wedge f^*(T)\wedge f^*(T):=\int _Yf_*(\varphi f^*(T))\wedge S\wedge T.
\end{eqnarray*}
Since $T$ may not be smooth, the equalities between the two sides of the above two expressions are not justified, if $\varphi$ is not a constant. 
 
From this simple consideration, we see that the definition in Equation (\ref{Equation0}) is more reasonable. Moreover, we will show later that if either $T$ is smooth or positive, then the definitions in Equations (\ref{Equation0}) and (\ref{Equation?}) are the same.
\end{remark}

Now we state and prove the continuous property referred to in the above remark.
\begin{lemma}
(Bedford-Taylor's monotone convergence.) Assume $S$ is a smooth closed $(1,1)$ form and $T$ is a current in the class ($\mathcal{A}$) and satisfies Condition 1. We write $f^*(T)=\Omega +dd^cu$, where $\Omega $ is a smooth closed $(1,1)$ form and $u=u^+-u^-$ is the difference of two quasi-psh functions. Let $u_j^{\pm}$ be a sequence of smooth quasi-psh functions decreasing to $u^{\pm}$. We denote $u_j=u_j^+-u_j^-$. Then 
\begin{eqnarray*}
\lim _{j\rightarrow\infty}\int _X\varphi f^*(S)\wedge (\Omega +dd^cu_j)\wedge f^*(T)=\int _YS\wedge f_*(\varphi (\Omega +dd^cu)\wedge f^*(T)).
\end{eqnarray*}
Here the current $f_*(\varphi (\Omega +dd^cu)\wedge f^*(T))$ is defined in Equation (\ref{Equation1}) below.
\label{Lemma?}\end{lemma}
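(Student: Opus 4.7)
My plan is to move the integral from $X$ to $Y$ via the projection formula, and then reduce everything to a classical Bedford-Taylor monotone convergence on $Y$ outside a small analytic set, using Condition~1 to rule out mass escape onto the exceptional locus. Since $S$ is smooth and $f$ is proper, the projection formula gives
$$\int _X \varphi f^*(S)\wedge (\Omega +dd^cu_j)\wedge f^*(T) \;=\; \int _Y S\wedge f_*\bigl(\varphi (\Omega +dd^cu_j)\wedge f^*(T)\bigr)$$
for every $j$. So the task reduces to identifying the weak limit of the pushed-forward $(2,2)$-currents $f_*(\varphi (\Omega +dd^cu_j)\wedge f^*(T))$, tested against a smooth form $S$, with the current on the right-hand side of the lemma, namely the one produced by Equation~(\ref{Equation1}).

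To describe the pushforward I would exploit that $g=f|_{X-I(f)}$ is an isomorphism onto $Y-I(f^{-1})$. Setting $\tilde \Omega =(g^{-1})^*\Omega $ and $\tilde u_j^{\pm}=u_j^{\pm}\circ g^{-1}$ on $Y-I(f^{-1})$, the pushforward restricted there equals $(\varphi \circ g^{-1})\,(\tilde \Omega +dd^c\tilde u_j)\wedge T$, while $T=\tilde \Omega +dd^c\tilde u$ on the same open set. Let $A\subset Y$ be the finite set on which $T^{\pm}$ fail to be smooth. On $Y-I(f^{-1})-A$ the current $T$ is smooth; splitting $\tilde u_j=\tilde u_j^+-\tilde u_j^-$ and applying the classical Bedford-Taylor monotone convergence theorem separately to each of the two decreasing families $\tilde u_j^{\pm}$ (after a Demailly-style regularization as in Definition~\ref{Definition3} so that each piece remains compatibly positive) yields weak convergence
$$(\tilde \Omega +dd^c\tilde u_j)\wedge T\;\longrightarrow\;T\wedge T$$
on that open set, and hence convergence of the integrals against $S$ localized there.

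The main obstacle, as expected, is ruling out mass escape onto the exceptional set $I(f^{-1})\cup A$ in the limit. For curves $C\subset I(f^{-1})$ this is precisely where Condition~1 enters: the identity $f^*(T)\wedge f^*(T)=f^*(T\wedge T)$ proved in \cite{truong} under $\{T\}.\{C\}=0$ yields a uniform bound on the mass of $\varphi (\Omega +dd^cu_j)\wedge f^*(T)$ on shrinking tubular neighborhoods of $f^{-1}(C)$, and hence on shrinking neighborhoods of $C$ after pushforward; this rules out concentration on the blown-down curves. For the isolated points of $A$, a uniform cohomological mass control (obtained by pairing with a K\"ahler form on $Y$, using $\{T\}^2$ and $\|\varphi\|_{L^{\infty}}$) together with the smoothness of $\tilde \Omega +dd^c\tilde u_j$ near those points prevents any spurious Dirac contribution in the limit. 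Once no mass escape is verified, the resulting weak limit coincides by construction with the current defined by Equation~(\ref{Equation1}), and the claimed identity follows.
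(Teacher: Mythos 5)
Your overall frame (push everything to $Y$, identify the limit off the exceptional set, rule out mass escape) does not match what the lemma actually asserts, and the crucial ``no mass escape'' step is not just unproved but false in general. The right-hand side of the lemma is the current defined by Equation (\ref{Equation1}), i.e.\ $(\pi_2)_*(\pi_1^*(\varphi)\wedge dd^c(\pi_1^*(uf^*(T))\wedge[\Gamma_g])^o)$, and by Lemma \ref{Lemma5} this current has the form $(f_*(\varphi)T\wedge T)^o+\sum_j(\chi_j^+-\chi_j^-)[C_j]$: when $\varphi$ is not constant it may genuinely carry mass on the curves $C_j\subset I(f^{-1})$ (the paper points this out explicitly in the remark before Lemma \ref{Lemma5}: Condition 1 forces the \emph{cohomological} cancellation $f_*(f^*(T)\wedge f^*(T))=T\wedge T$, but each signed piece $(\pi_2)_*(R^\pm)$, weighted by the non-constant $\pi_1^*(\varphi)$, need not cancel over $I(f^{-1})$). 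So your plan --- prove convergence to $T\wedge T$ times $\varphi\circ g^{-1}$ on $Y-I(f^{-1})-A$ and then show no mass concentrates on $I(f^{-1})\cup A$ --- would, even if the mass estimate worked, identify the limit with $(f_*(\varphi)T\wedge T)^o$, which differs from the current of Equation (\ref{Equation1}) exactly by the terms $(\chi_j^+-\chi_j^-)[C_j]$; that identification is only valid when $T$ is positive (Theorem \ref{Lemma1}, part 1) or smooth, not for a general $T=T^+-T^-$ in Class $(\mathcal{A})$. Moreover, your mass-escape argument itself does not go through: a uniform bound on masses in shrinking neighborhoods of $f^{-1}(C)$ does not prevent concentration, and Condition 1 controls only the difference of the $T^{\pm}$ contributions, not each one separately once multiplied by $\varphi$.

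The paper's route is different and avoids this trap: first it proves, for each fixed $j$, the identity $\int_X\varphi f^*(S)\wedge(\Omega+dd^cu_j)\wedge f^*(T)=\int_YS\wedge f_*(\varphi(\Omega+dd^cu_j)\wedge f^*(T))$ --- and even this step is not a formal projection formula for a proper holomorphic map, since $f$ is only bimeromorphic and $f^*(T)$ is singular; it is obtained by approximating $f^*(T)$ by smooth forms $\gamma_n^+-\gamma_n^-$ with uniformly bounded masses and passing to the limit on the graph by a dimension argument. Second, it invokes Theorem \ref{Theorem1}, part 2), whose proof takes place \emph{upstairs on the graph}: one compares $\pi_1^*(f^*(T))\wedge[\Gamma_f]$ with the extension by zero $(\pi_1^*(f^*(T))\wedge[\Gamma_g])^o$ (the discrepancy consists of $2$-dimensional components $V_j$ with $\pi_2(V_j)$ a point, which die under $(\pi_2)_*$ for dimension reasons), and then shows by a sandwich argument that $(\pi_1^*(u_jf^*(T))\wedge[\Gamma_g])^o$ converges to $(\pi_1^*(uf^*(T))\wedge[\Gamma_g])^o$, the ``no mass on $\Gamma_f-\Gamma_g$'' statement being applied before taking $dd^c$ and pushing forward. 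If you want to salvage your approach, you would have to carry out your analysis on $\Gamma_f$ versus $\Gamma_g$ rather than on $Y$ minus $I(f^{-1})$, since it is only at that level that the extension-by-zero property holds.
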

\begin{proof}
First, we show that for each $j$
\begin{eqnarray*}
\int _X\varphi f^*(S)\wedge (\Omega +dd^cu_j)\wedge f^*(T)=\int _YS\wedge f_*(\varphi (\Omega +dd^cu_j)\wedge f^*(T)).
\end{eqnarray*}
Here both sides are well-defined, since $\varphi$, $S$, $\Omega$ and $u_j$ are smooth. The term $f_*(\varphi (\Omega +dd^cu_j)\wedge f^*(T))$ is defined as follows, by Meo's results:
\begin{eqnarray*}
f_*(\varphi (\Omega +dd^cu_j)\wedge f^*(T))=(\pi _2)_*(\pi _1^*(\varphi (\Omega +dd^cu_j))\wedge \pi _1^*(f^*(T))\wedge [\Gamma _f]).
\end{eqnarray*}

Now we can approximate $f^*(T)$ by smooth closed $(1,1)$ forms $\gamma _n=\gamma _n^+-\gamma _n^-$. Here $\gamma _n^{\pm}$ positive closed smooth $(1,1)$ forms with uniformly bounded masses, and converges locally uniformly on $X-I(f)$ to $f^*(T)$. 

Then it can be seen, by dimension reason (see for example the proof of Lemma 5 in \cite{truong0}), that 
\begin{eqnarray*}
\int _X\varphi f^*(S)\wedge (\Omega +dd^cu_j)\wedge f^*(T)=\lim _{n\rightarrow\infty}\int _Y\varphi f^*(S)\wedge (\Omega +dd^cu_j)\wedge (\gamma _n^+-\gamma _n^-).
\end{eqnarray*}
 
Since all $\varphi$, $S$, $\Omega$, $u_j$ and $\gamma _n^{\pm}$ are all smooth, we have
\begin{eqnarray*}
\int _X\varphi f^*(S)\wedge (\Omega +dd^cu_j)\wedge (\gamma _n^+-\gamma _n^-)=\int _YS\wedge f_*(\varphi (\Omega +dd^cu_j)\wedge (\gamma _n^+-\gamma _n^-)).
\end{eqnarray*}
 
Now 
\begin{eqnarray*}
f_*(\varphi (\Omega +dd^cu_j)\wedge (\gamma _n^+-\gamma _n^-))=(\pi _2)_*(\pi _1^*(\varphi )\pi _1^*(\Omega +dd^cu_j)\wedge \pi _1^*(\gamma _n^+-\gamma _n^-)\wedge [\Gamma _f]).
\end{eqnarray*}

The limit when $n\rightarrow\infty$ of the right hand side is $(\pi _2)_*(\pi _1^*(\varphi )\pi _1^*(\Omega +dd^cu_j)\wedge \pi _1^*(f^*(T))\wedge [\Gamma _f])$. This is because the limit of $\pi _1^*(\varphi )\pi _1^*(\Omega +dd^cu_j)\wedge \pi _1^*(\gamma _n^+-\gamma _n^-)\wedge [\Gamma _f]$ is $\pi _1^*(\varphi )\pi _1^*(\Omega +dd^cu_j)\wedge \pi _1^*(f^*(T))\wedge [\Gamma _f]$. 

Therefore the claim is proved. Using this claim and part 2) of Theorem \ref{Theorem1} below, the lemma follows. 
\end{proof}

\subsection{Definition of the current $f_*(\varphi (\Omega +dd^cu)\wedge f^*(T))$}

We denote by $(\pi _1^*(f^*(T))\wedge [\Gamma _g])^o$ the extension by zero of the current $\pi _1^*(f^*(T))\wedge [\Gamma _g]$ (the latter has  bounded mass by Meo's result \cite{meo}). Let $u$ be a quasi-psh function on $X$. Theorem 1.2 in \cite{truong} shows that the current $\pi _1^*(uf^*(T))\wedge [\Gamma _g]$ has bounded mass, and we let $(\pi _1^*(uf^*(T))\wedge [\Gamma _g])^o$ denote its extension by zero. In \cite{truong}, we defined 
\begin{equation}
f_*(\varphi dd^cu\wedge f^*(T)):=(\pi _2)_*(\pi _1^*(\varphi )\wedge dd^c(\pi _1^*(uf^*(T))\wedge [\Gamma _g])^o)
\label{Equation1}\end{equation}

We now prove a Bedford-Taylor's monotone convergence theorem for this operator.

\begin{theorem}
Assume that $T$ is in Class ($\mathcal{A}$) and satisfies the Condition 1. Then 

1) If $u$ is a smooth quasi-psh function on $X$, we have
\begin{eqnarray*}
f_*(\varphi dd^cu\wedge f^*(T))=(\pi _2)_*(\pi _1^*(\varphi )\wedge dd^c\pi _1^*(uf^*(T))\wedge [\Gamma _f]).
\end{eqnarray*}
The right hand side above is the (correct) usual definition in the case $u$ is smooth. 

2) Let $u$ be a quasi-psh function on $X$, and let $u_j$ be a sequence of smooth quasi-psh functions decreasing to $u$. Then 
\begin{eqnarray*}
\lim _{j\rightarrow\infty}f_*(\varphi dd^cu_j\wedge f^*(T))=f_*(\varphi dd^cu\wedge f^*(T)).
\end{eqnarray*}
\label{Theorem1}\end{theorem}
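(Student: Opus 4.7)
The plan is to work on $X \times Y$ using the graph $\Gamma_g$ as the open portion where $\pi_1$ is an isomorphism onto $X - I(f)$, and to push the analysis through the extension-by-zero operation $(\cdot)^o$. The two preceding results I would lean on are the identity $(\pi_1^*(f^*(T)) \wedge [\Gamma_g])^o = \pi_1^*(f^*(T)) \wedge [\Gamma_f]$ (valid under Condition 1, from \cite{truong}), and the uniform mass bound for $\pi_1^*(u f^*(T)) \wedge [\Gamma_g]$ from Theorem 1.2 of \cite{truong}.

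For Part 1, when $u$ is smooth, $\pi_1^*(u)$ is smooth, so multiplication by it commutes with the extension by zero, giving
$$(\pi_1^*(u f^*(T)) \wedge [\Gamma_g])^o = \pi_1^*(u) \cdot \pi_1^*(f^*(T)) \wedge [\Gamma_f].$$
The current $\pi_1^*(f^*(T)) \wedge [\Gamma_f]$ is closed, so Leibniz applied to smooth $\pi_1^*(u)$ together with the closedness of $f^*(T)$ yields
$$dd^c (\pi_1^*(u f^*(T)) \wedge [\Gamma_g])^o = \pi_1^*(dd^c u) \wedge \pi_1^*(f^*(T)) \wedge [\Gamma_f] = dd^c \pi_1^*(u f^*(T)) \wedge [\Gamma_f].$$
Wedging with $\pi_1^*(\varphi)$ and applying $(\pi_2)_*$ then gives the claim.

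For Part 2, I would decompose $u = u^+ - u^-$ and $T = T^+ - T^-$ to reduce to a quasi-psh function against a positive closed $(1,1)$-current. On $\Gamma_g$, identified with $X - I(f)$ via $\pi_1$, the currents $u_j^\pm f^*(T^\pm)$ decrease monotonically to $u^\pm f^*(T^\pm)$, so classical Bedford-Taylor monotone convergence gives weak convergence of $\pi_1^*(u_j f^*(T)) \wedge [\Gamma_g]$ to $\pi_1^*(u f^*(T)) \wedge [\Gamma_g]$ as currents on $\Gamma_g$. Using the uniform mass bound of \cite{truong}, I would then pass to the extensions by zero on $X \times Y$, which requires ruling out mass concentration on $\Gamma_f \setminus \Gamma_g$ in the limit. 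Since $dd^c$ is continuous on currents, wedging with the smooth form $\pi_1^*(\varphi)$ preserves convergence, and $(\pi_2)_*$ is continuous on currents supported in the proper subset $\Gamma_f$, the full chain of operations commutes with the limit $j \to \infty$.

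The main obstacle is justifying the continuity of the $(\cdot)^o$ operation along the sequence, namely
$$\lim_{j \to \infty} (\pi_1^*(u_j f^*(T)) \wedge [\Gamma_g])^o = (\pi_1^*(u f^*(T)) \wedge [\Gamma_g])^o$$
in the weak topology on $X \times Y$. Weak convergence on $\Gamma_g$ is automatic from classical monotone convergence, but the extension by zero is a global operation, and no limit mass may be allowed to leak onto $\Gamma_f \setminus \Gamma_g$. This is where Condition 1 and the dimension bound on $\Gamma_f \setminus \Gamma_g$ enter, in concert with the uniform mass estimate for $\pi_1^*(u_j f^*(T)) \wedge [\Gamma_g]$ in $j$ from \cite{truong}. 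Once this boundary control is in place, the remaining steps (applying $dd^c$, wedging with the smooth $\pi_1^*(\varphi)$, and pushing forward by the proper $\pi_2$) are routine by the continuity of these operations.
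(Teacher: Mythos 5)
Your overall framework (working on the graph, extension by zero, Meo-type mass bounds, monotone convergence) is the same as the paper's, but both parts have genuine gaps at exactly the points where the singularities of $T$ matter. In Part 1 you invoke the identity $(\pi_1^*(f^*(T))\wedge[\Gamma_g])^o=\pi_1^*(f^*(T))\wedge[\Gamma_f]$ as if it were valid under Condition 1. That identity comes from the setting of \cite{truong}, where $T$ is smooth; here $T$ is only in Class $(\mathcal{A})$ and may be singular at points of $I(f^{-1})$, and the correct statement (a modification of Theorem 1.3 of \cite{truong}) is
\[
\pi_1^*(f^*(T))\wedge[\Gamma_f]=(\pi_1^*(f^*(T))\wedge[\Gamma_g])^o+\sum_j\lambda_j[V_j],
\]
with $\lambda_j\geq 0$ and $V_j\subset\Gamma_f-\Gamma_g$ of dimension $2$, $\pi_2(V_j)$ contained in the finite singular set of $T$. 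Hence your intermediate identity $(\pi_1^*(uf^*(T))\wedge[\Gamma_g])^o=\pi_1^*(u)\,\pi_1^*(f^*(T))\wedge[\Gamma_f]$ is false in general; the conclusion of Part 1 survives only because the extra terms are killed by the push-forward, namely $(\pi_2)_*(\pi_1^*(\varphi)\,dd^c\pi_1^*(u)\wedge[V_j])=0$ since $\pi_2(V_j)$ is a point. This dimension argument, together with the commutation $\pi_1^*(u)(\pi_1^*(f^*(T))\wedge[\Gamma_g])^o=(\pi_1^*(uf^*(T))\wedge[\Gamma_g])^o$ (which you only assert; the paper proves it by squeezing the current between $0$ and $-M(\pi_1^*(f^*(T))\wedge[\Gamma_g])^o$, both massless on $\Gamma_f-\Gamma_g$), is precisely the content of the paper's proof and is absent from yours.

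In Part 2 you correctly isolate the crucial step, that no mass of the limit may leak onto $\Gamma_f\setminus\Gamma_g$, but you do not prove it: saying that ``Condition 1 and the dimension bound enter'' is a placeholder, and that is not in fact the mechanism the paper uses. The paper's argument is an order/squeeze argument: reduce to $T$ positive and $u_j,u\leq 0$; any cluster point $R$ of $(\pi_1^*(u_jf^*(T))\wedge[\Gamma_g])^o$ then satisfies $0\geq R\geq(\pi_1^*(uf^*(T))\wedge[\Gamma_g])^o$, the two agree on $\Gamma_g$ by ordinary monotone convergence, and the right-hand current has no mass on $\Gamma_f-\Gamma_g$ by the very definition of extension by zero; the squeeze forces $R$ to carry no mass there either, hence $R$ equals the right-hand side and the limit exists. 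Without this negativity/monotonicity argument (or a substitute for it), your Part 2 stops exactly at the step that needs proof.
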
    
\begin{proof}

1) A modification of the proof of Theorem 1.3 in \cite{truong} shows that 
\begin{eqnarray*}
\pi _1^*(f^*(T))\wedge [\Gamma _f]=(\pi _1^*(f^*(T))\wedge [\Gamma _g])^o+\sum _j\lambda _j[V_j].
\end{eqnarray*}
Here $\lambda _j\geq 0$ is a constant, and $V_j$ are varieties of dimension $2$ contained in $\Gamma _f-\Gamma _g$. Moreover, $\pi _2(V_j)$ are contained in the finite set of singular points of $T$.

Since $u$ is smooth, it is not difficult to check that 
\begin{equation}
\pi _1^*(u)(\pi _1^*(f^*(T))\wedge [\Gamma _g])^o=(\pi _1^*(u)\pi _1^*(f^*(T))\wedge [\Gamma _g])^o.
\label{Equation2}\end{equation}
Since we will use similar arguments later on, we give here a detailed proof. Using $T=T^+-T^-$, we may assume that $T$ is positive. We may also assume that $0\geq u\geq -M$. Then $\pi _1^*(u)(\pi _1^*(f^*(T))\wedge [\Gamma _g])^o$ is bounded between the two negative currents $0$ and $-M(\pi _1^*(f^*(T))\wedge [\Gamma _g])^o$. Both these currents have no mass on $\Gamma _f-\Gamma _g$, so is $\pi _1^*(u)(\pi _1^*(f^*(T))\wedge [\Gamma _g])^o$. On $\Gamma _g$, $\pi _1^*(u)(\pi _1^*(f^*(T))\wedge [\Gamma _g])^o$ equals $(\pi _1^*(u)\pi _1^*(f^*(T))\wedge [\Gamma _g])^o$, and the current $(\pi _1^*(u)\pi _1^*(f^*(T))\wedge [\Gamma _g])^o$ has no mass on $\Gamma _f-\Gamma _g$ by definition. Therefore, the two currents in Equation (\ref{Equation2}) are the same on $Y$.   

For any $j$, since $\pi _2(V_j)$ is a point, by the dimension reason we see immediately that 
\begin{eqnarray*}
(\pi _2)_*(\pi _1^*(\varphi )dd^c\pi _1^*(u)\wedge [V_j])=0.
\end{eqnarray*}
Therefore we obtain
\begin{eqnarray*}
(\pi _2)_*(\pi _1^*(\varphi )\wedge dd^c\pi _1^*(uf^*(T))\wedge [\Gamma _f])=(\pi _2)_*(\pi _1^*(\varphi )\wedge dd^c(\pi _1^*(uf^*(T))\wedge [\Gamma _g])^o),
\end{eqnarray*}
and the latter was defined to be $f_*(\varphi dd^cu\wedge f^*(T))$ in Equation (\ref{Equation1}).

2) From Equation (\ref{Equation2}), it suffices to show that 
\begin{eqnarray*}
\lim _{j\rightarrow\infty}(\pi _1^*(u_jf^*(T))\wedge [\Gamma _g])^o=(\pi _1^*(uf^*(T))\wedge [\Gamma _g])^o.
\end{eqnarray*}
The proof of this is similar to that used to prove Equation (\ref{Equation2}). We can assume that $T$ is positive, all $u_j$ and $u$ are negative. Let $R$ be one cluster point of the left hand side. Then $R$ is negative, $R\geq $ the right hand side, and on $\Gamma _g$ then $R=$ the right hand side. Since the right hand side has no mass on $\Gamma _f-\Gamma _g$ by definition, we conclude that $R=$ the right hand side. 
\end{proof}

We write $f^*(T)=\Omega +dd^cu$, where $\Omega $ is a smooth closed $(1,1)$ form, and $u=u^+-u^-$ is a difference of two quasi-psh functions. By Theorem \ref{Theorem1}, the pushforward
\begin{equation}
f_*(\varphi f^*(T)\wedge f^*(T)):=f_*(\varphi \Omega \wedge f^*(T))+f_*(\varphi dd^cu^+\wedge f^*(T))-f_*(\varphi dd^cu^-\wedge f^*(T))
\label{Equation3}\end{equation}
is well-defined. Moreover, if $u_j^{\pm}$ is a sequence of smooth quasi-psh functions decreasing to $u^{\pm}$ then
\begin{equation}
\lim _{j\rightarrow\infty}f_*(\varphi (\Omega +dd^c(u_j^+-u_j^-))\wedge f^*(T))=f_*(\varphi f^*(T)\wedge f^*(T)).
\label{Equation4}\end{equation}
This Bedford-Taylor's monotone convergence type implies the following 
\begin{lemma}
The definition in Equation (\ref{Equation3}) is independent of the choice of $\Omega$ and $u$ in $f^*(T)=\Omega +dd^cu$.
\label{Lemma4}\end{lemma}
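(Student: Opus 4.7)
The plan is to reduce the claim to showing that a single smooth-potential correction vanishes identically. Suppose we are given two decompositions $f^*(T)=\Omega+dd^c(u^+-u^-)=\Omega'+dd^c(u'^+-u'^-)$, and set $w:=(u^+-u^-)-(u'^+-u'^-)\in L^1_{\rm loc}(X)$. Then $dd^cw=\Omega'-\Omega$ is smooth; taking the trace with any K\"ahler metric on $X$ shows that $\Delta w$ is smooth, so by elliptic regularity $w$ itself is smooth.

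I would next record a linearity property of the defining formula (\ref{Equation1}): for any two quasi-psh functions $v_1,v_2$ on $X$, the identity $f_*(\varphi dd^c(v_1+v_2)\wedge f^*(T))=f_*(\varphi dd^cv_1\wedge f^*(T))+f_*(\varphi dd^cv_2\wedge f^*(T))$ holds, because $(v_1+v_2)f^*(T)=v_1f^*(T)+v_2f^*(T)$, extension by zero across $\Gamma_f-\Gamma_g$ is linear on currents that individually put no mass there, and both $dd^c$ and $(\pi_2)_*$ are linear. Applying this to $P:=u^++u'^-$ and $Q:=u'^++u^-$ turns the difference of the two candidate definitions of $f_*(\varphi f^*(T)\wedge f^*(T))$ into
\[
f_*(\varphi(\Omega-\Omega')\wedge f^*(T))+f_*(\varphi dd^cP\wedge f^*(T))-f_*(\varphi dd^cQ\wedge f^*(T)),
\]
and a second use of linearity further collapses the last two terms to $(\pi_2)_*(\pi_1^*(\varphi)\wedge dd^c(\pi_1^*(wf^*(T))\wedge[\Gamma_g])^o)$, with $w=P-Q$ smooth.

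Since $w$ is smooth, Theorem \ref{Theorem1}(1) allows replacing $(\pi_1^*(wf^*(T))\wedge[\Gamma_g])^o$ by $\pi_1^*(wf^*(T))\wedge[\Gamma_f]$; then, because $\pi_1^*(f^*(T))$ is closed and $w$ is smooth, we may pull $dd^c$ across to obtain $\pi_1^*(dd^cw)\wedge\pi_1^*(f^*(T))\wedge[\Gamma_f]=\pi_1^*(\Omega'-\Omega)\wedge\pi_1^*(f^*(T))\wedge[\Gamma_f]$. The resulting expression is precisely the standard pushforward $f_*(\varphi(\Omega'-\Omega)\wedge f^*(T))$, which exactly cancels the first term in the displayed difference. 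Hence the difference is $0$ and the definition is decomposition-independent. I expect the main point needing care to be the passage from $dd^cw$ smooth to $w$ smooth, i.e.\ the elliptic regularity step; every other move is formal bookkeeping already licensed by Theorem \ref{Theorem1} and the additivity of (\ref{Equation1}).
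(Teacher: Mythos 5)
Your proposal is correct, and it reaches the conclusion by a more direct route than the paper, which offers essentially no written proof: it derives Lemma \ref{Lemma4} from the Bedford--Taylor type convergence in (\ref{Equation4}), i.e.\ both candidate definitions are realized as limits of $f_*(\varphi (\Omega +dd^cu_j^+-dd^cu_j^-)\wedge f^*(T))$ for smooth approximants, and since for smooth data the pushforward depends only on the total form, the two limits are matched once one notes (as you do) that two writings of $f^*(T)$ differ by a smooth $dd^c$-potential. You instead verify independence directly at the level of the defining formula (\ref{Equation1}): additivity in the quasi-psh potential (linearity of $\pi_1^*$, of the wedge with $[\Gamma _g]$, of trivial extension of currents with locally bounded mass, and of $dd^c$ and $(\pi_2)_*$), elliptic regularity to see that the discrepancy $w$ with $dd^cw=\Omega'-\Omega$ is smooth, the smooth-case identity of Theorem \ref{Theorem1}(1) to pass from $\Gamma _g$ to $\Gamma _f$, and the Leibniz rule against a closed current to turn $dd^cw$ into the smooth form $\Omega'-\Omega$, which cancels the $\Omega$-terms. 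This buys a proof that never invokes the approximation statement Theorem \ref{Theorem1}(2), and it makes explicit that the definition is also independent of the splitting $u=u^+-u^-$ (note $P=u^++u'^-$ and $Q=u'^++u^-$ are quasi-psh, so (\ref{Equation1}) does apply to them); the paper's route is shorter given (\ref{Equation4}) and has the advantage of exhibiting the common value as the monotone limit used later. Two points worth stating precisely, neither a gap: the Leibniz step uses closedness of the product current $\pi_1^*(f^*(T))\wedge [\Gamma _f]$ (not merely of $\pi_1^*(f^*(T))$), which holds by Meo's construction; and when discarding the components $V_j\subset \Gamma _f-\Gamma _g$ you should argue as in the proof of Theorem \ref{Theorem1}(1), namely that $\pi_2|_{V_j}$ is constant, so $\pi_2^*$ of any positive-degree test form restricts to zero on $V_j$.
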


\subsection{Definition of the current $T\wedge f_*(\varphi (\Omega +dd^cu)\wedge f^*(T))$}

Let $f_*(\varphi (\Omega +dd^cu)\wedge f^*(T))$ be the current defined in the previous subsection. We now define the intersection $T\wedge f_*(\varphi (\Omega +dd^cu)\wedge f^*(T))$. Without loss of generality we may assume that $0\leq \varphi \leq 1$.

We recall that from Theorem \ref{Theorem1}, if $u=u^+-u^-$ where $u^{\pm}$ are quasi-psh functions, and $u_j^{\pm}$  are smooth quasi-psh functions decreasing to $u^{\pm}$ then
\begin{eqnarray*}
f_*(\varphi (\Omega +dd^cu)\wedge f^*(T))&=&\lim _{j\rightarrow\infty}f_*(\varphi (\Omega +dd^cu_j^+-dd^cu_j^-)\wedge f^*(T))\\
&=&\lim _{j\rightarrow\infty}(\pi _2)_* (\pi _1^*(\varphi ) \pi _1^*(\Omega +dd^cu_j^+-dd^cu_j^-)\wedge \pi _1^*f^*(T)\wedge [\Gamma _f]).
\end{eqnarray*}  

While the sequence $\pi _1^*(\Omega +dd^cu_j^+-dd^cu_j^-)\wedge \pi _1^*f^*(T)\wedge [\Gamma _f]$ may not have a limit, it is a compact sequence and we let $R^+-R^-$  be a cluster point. Here $R^{\pm}$ are positive closed currents of bidimension $(1,1)$ supported in $\Gamma _f$. By the result discussed in the previous paragraph, we have 
\begin{eqnarray*}
f_*(\varphi (\Omega +dd^cu)\wedge f^*(T))=(\pi _2)_*(\pi _1^*(\varphi )R^+-\pi _1^*(\varphi )R^-).
\end{eqnarray*}
Since we assumed that $0\leq \varphi\leq 1$, we have
\begin{eqnarray*}
0\leq (\pi _2)_*(\pi _1^*(\varphi )\wedge R^{\pm})\leq (\pi _2)_*(R^{\pm})
\end{eqnarray*}

\begin{remark}
Note that, under Condition 1 and the assumption that $T$ is in class ($\mathcal{A}$), then $$(\pi _2)_*(R^{+}-R^-)=f_*(f^*(T)\wedge f^*(T))=f_*(f^*(T\wedge T))=T\wedge T$$
has no mass on $I(f^{-1})$. Here we used that $f_*f^*=Id$ on positive closed $(1,1)$ and $(2,2)$ currents, see Theorem 1 in \cite{truong1}. However, each $(\pi _2)_*(R^{\pm})$ may have mass on $I(f^{-1})$. Therefore, if $\varphi$ is not a constant, $(\pi _2)_*(\pi _1^*(\varphi )R^+-\pi _1^*(\varphi )R^-)$ may have mass on $I(f^{-1})$.
\end{remark}

Since the currents $(\pi _2)_*(\pi _1^*(\varphi )\wedge R^{\pm})$ are positive DSH currents in the sense in Dinh-Sibony \cite{dinh-sibony1, dinh-sibony2}, they are $\mathbb{C}$-flat in the sense of Bassanelli \cite{bassanelli}. By Federer-type $\mathbb{C}$-flatness theorem (Theorem 1.24 in \cite{bassanelli}), the restrictions of $(\pi _2)_*(\pi _1^*(\varphi )\wedge R^{\pm})$ to $I(f^{-1})$ are well-defined as a current on $I(f^{-1})$. 

Note that on $Y-I(f^{-1})$, then $f_*(\varphi (\Omega +dd^cu)\wedge f^*(T))=f_*(\varphi )T\wedge T$. Let $(f_*(\varphi )T\wedge T)^o$ be the extension by zero of this current from $Y-I(f^{-1})$ to $Y$. From the discussion above, and taking the bidimension of the various currents into consideration, we obtain the following result

\begin{lemma}
\begin{eqnarray*}
f_*(\varphi (\Omega +dd^cu)\wedge f^*(T))=(f_*(\varphi )T\wedge T)^o+\sum _j(\chi _j^+-\chi _j^-)[C_j].
\end{eqnarray*}
Here $C_j$ are irreducible components of dimension $1$ of $I(f^{-1})$, and $\chi _j^{\pm}$ are bounded positive measurable functions on $C_j$.  

\label{Lemma5}\end{lemma}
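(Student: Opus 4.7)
The plan is to combine the cluster-point representation of $f_*(\varphi(\Omega+dd^cu)\wedge f^*(T))$ from the preceding discussion with Bassanelli's Federer-type $\C$-flatness theorem, using the sandwich $0\leq (\pi_2)_*(\pi_1^*(\varphi) R^{\pm})\leq (\pi_2)_*(R^{\pm})$ to extract bounded coefficients along the curves of $I(f^{-1})$.

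First I would write $f_*(\varphi(\Omega+dd^cu)\wedge f^*(T))=S_+-S_-$, where $S_{\pm}:=(\pi_2)_*(\pi_1^*(\varphi) R^{\pm})$ are positive currents of bidimension $(1,1)$ on $Y$, each a positive DSH current in the sense of Dinh--Sibony, hence $\C$-flat in the sense of Bassanelli. On the open set $Y-I(f^{-1})$, the graph $\Gamma_f$ coincides with $\Gamma_g$ and $\pi_2|_{\Gamma_g}$ realizes the inverse of the biholomorphism $g=f|_{X-I(f)}$, so the pushforward reduces to the classical one and equals $f_*(\varphi)\cdot T\wedge T$. Hence the current $f_*(\varphi(\Omega+dd^cu)\wedge f^*(T))-(f_*(\varphi) T\wedge T)^o$ has support contained in the $1$-dimensional set $I(f^{-1})$.

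Next, applying Theorem 1.24 of \cite{bassanelli} to each $S_{\pm}$, a positive $\C$-flat current of bidimension $(1,1)$ admits a canonical decomposition $S_{\pm}=(S_{\pm})^o+\sum_j \chi_j^{\pm}[C_j]$, where $(S_{\pm})^o$ is the extension by zero of $S_{\pm}|_{Y-I(f^{-1})}$, the sum runs over the irreducible components $C_j$ of $I(f^{-1})$, and $\chi_j^{\pm}\geq 0$ is Borel measurable on $C_j$. Subtracting, the diffuse parts combine to $(f_*(\varphi)T\wedge T)^o$ by the identification on $Y-I(f^{-1})$ and the atomic parts give $\sum_j(\chi_j^+-\chi_j^-)[C_j]$. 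For boundedness of $\chi_j^{\pm}$, I would apply the same decomposition to the positive \emph{closed} current $(\pi_2)_*(R^{\pm})$; by Siu's structure theorem its atomic part along each $C_j$ is $\lambda_j^{\pm}[C_j]$ with $\lambda_j^{\pm}\geq 0$ a constant, and the inequality $S_{\pm}\leq (\pi_2)_*(R^{\pm})$ read on the $[C_j]$-components via the generic Lelong-type density yields $0\leq \chi_j^{\pm}\leq \lambda_j^{\pm}$ almost everywhere.

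The main obstacle I anticipate lies in this last comparison. Since $S_{\pm}$ are not closed, I must work with the $\C$-flat analogue of the Lelong density rather than the classical Lelong number and verify that the canonical atomic coefficient extracted by Bassanelli's theorem depends monotonically on the positive current when restricted to $C_j$; this amounts to a uniqueness statement for the Federer decomposition modulo currents with no mass on $I(f^{-1})$, which while plausible from the general structure of $\C$-flat currents requires separate verification to cement the proof.
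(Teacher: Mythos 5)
Your proposal follows essentially the same route as the paper: the paper's ``proof'' is precisely the preceding discussion (positive DSH $\Rightarrow$ $\C$-flat, Bassanelli's Theorem 1.24 to restrict $(\pi_2)_*(\pi_1^*(\varphi)R^{\pm})$ to $I(f^{-1})$, bidimension considerations to rule out lower-dimensional pieces, and the sandwich $0\leq(\pi_2)_*(\pi_1^*(\varphi)R^{\pm})\leq(\pi_2)_*(R^{\pm})$ for boundedness), which is exactly what you carry out. The final point you flag is unproblematic: positive currents have measure coefficients, so restriction to the Borel set $C_j$ preserves the ordering, and the restriction of the closed positive bidimension $(1,1)$ current $(\pi_2)_*(R^{\pm})$ to $C_j$ is a constant multiple of $[C_j]$, which gives the claimed bound on $\chi_j^{\pm}$.
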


Let $A$ be the finite set where $T$ is not smooth. Since $f_*(\varphi )$ is a difference of two quasi-psh functions and $T^{\pm}$ are continuous on $Y-A$, by results in Fornaess-Sibony \cite{fornaess-sibony2} and Demailly \cite{demailly} (Section 4, Chapter 3), the current $f_*(\varphi )T\wedge T$ is well-defined on $Y-A$. Moreover, a monotone convergence property holds. Therefore, since $T^{\pm}\wedge T^{\pm}$ are positive closed currents with no mass on $I(f^{-1})$, an argument similar to that in the proof of Equation (\ref{Equation2}) concludes that $f_*(\varphi )T\wedge T$ has no mass on $I(f^{-1})-A$. By dimension reason, we see that $f_*(\varphi )T\wedge T$ extends as a current on $Y$. The extension current is the same as the current $(f_*(\varphi )T\wedge T)^o$ defined before Lemma \ref{Lemma5}. 

By Lemma \ref{Lemma5}, to define $T\wedge f_*(\varphi (\Omega +dd^cu)\wedge f^*(T))$, it is enough to define $T\wedge (f_*(\varphi )T\wedge T)^o$ and $T\wedge \chi _j^{\pm}[C_j]$ for each $j$. We note that $T\wedge T\wedge T=\mu ^+-\mu ^-$, where $\mu ^{\pm}$ are positive measures which are smooth on $Y-A$. If Condition 2 is satisfied, then we can choose $\mu ^{\pm}$ to have no mass on $A$. Similarly, $T\wedge [C_j]$ is a difference of two positive measures, which we can take to have no Dirac mass if Condition 3 is satisfied. 

The following continuous property is a simple result in measure theory. For completeness, we include a proof of it here.

\begin{lemma} Assume that $T$ is in Class ($\mathcal{A}$) and Condition 2) is satisfied. 

Let $\gamma _n$ be a sequence of uniformly continuous functions on $Y$ which converges to $f_*(\varphi )$ as currents. Moreover, assume that $\gamma _n=f_*(\varphi )$ on an open set $W$ with $W\cap I(f^{-1})=\emptyset$, such that $T$ is smooth on $X-W-I(f^{-1})$. Then the sequence $T\wedge (\gamma _nT\wedge T)^o=\gamma _nT\wedge T\wedge T$ converges to $f_*(\varphi )(\mu ^+-\mu ^-)$. Here the measure $f_*(\varphi )(\mu ^+-\mu ^-)$ is well-defined on $Y-A$, and is defined to be $0$ on the finite set $A$.

A similar result holds when we consider the measures $T\wedge [C_j]$ and the functions $\chi _j^{\pm}$. 
\label{Lemma6}\end{lemma}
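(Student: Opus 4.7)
The plan is to reduce to convergence $\int_Y\psi\gamma_n\,d\mu^\pm\to\int_Y\psi f_*(\varphi)\,d\mu^\pm$ for each sign against an arbitrary continuous test function $\psi$ on $Y$; the signed statement then follows by linearity. Since $\gamma_n\equiv f_*(\varphi)$ on $W$, the integrals over $W$ coincide for every $n$, so only the integral over $Y-W$ is at stake. On $Y-W$, the hypothesis forces $T$ to be smooth outside the finite set $A\cap(Y-W)$, which lies inside $I(f^{-1})$; consequently $\mu^\pm$ has a smooth density off this finite set, and by Condition 2 the decomposition is chosen so that $\mu^\pm$ gives no mass to $A$.

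The main step is a cutoff argument localizing the potential singularity of $f_*(\varphi)$ near $I(f^{-1})$. Given $\epsilon>0$, choose a smooth cutoff $0\le\chi_\epsilon\le 1$ supported in a neighborhood $U_\epsilon$ of $A\cap(Y-W)$, identically $1$ on a smaller subneighborhood, with $U_\epsilon$ small enough that $\mu^\pm(U_\epsilon)<\epsilon$; this is possible because $A$ is finite and $\mu^\pm$ is atomless there. Outside $\mathrm{supp}(\chi_\epsilon)$ the density of $\mu^\pm$ with respect to Lebesgue measure is smooth and bounded, so the current convergence $\gamma_n\to f_*(\varphi)$ together with the uniform bound $\|\gamma_n\|_\infty\le M$ (standard for the approximation and forcing $L^1_{loc}$ convergence) yields
\begin{equation*}
\int_{Y-W}\psi(1-\chi_\epsilon)\gamma_n\,d\mu^\pm\;\longrightarrow\;\int_{Y-W}\psi(1-\chi_\epsilon)f_*(\varphi)\,d\mu^\pm.
\end{equation*}
The remaining piece satisfies $\bigl|\int_{Y-W}\psi\chi_\epsilon\gamma_n\,d\mu^\pm\bigr|\le M\|\psi\|_\infty\mu^\pm(U_\epsilon)<M\|\psi\|_\infty\epsilon$, and an identical bound applies to the limit. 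Sending $n\to\infty$ and then $\epsilon\to 0$ delivers the claimed weak convergence.

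The analogous statement for $T\wedge[C_j]$ and $\chi_j^\pm$ proceeds verbatim after writing $T\wedge[C_j]=\nu^+-\nu^-$ with $\nu^\pm$ atomless under Condition 3, localizing the cutoff near the finitely many points on $C_j$ at which Dirac masses could a priori have appeared; the approximating functions are again uniformly bounded continuous functions coinciding with $\chi_j^\pm$ on $C_j\cap W$. The main obstacle throughout is the blow-up of $f_*(\varphi)$, respectively $\chi_j^\pm$, near $I(f^{-1})$, which precludes a direct continuity argument; the atomlessness furnished by Conditions 2 and 3 is exactly what makes the singular region contribute a vanishing error, while the $L^\infty$ bound on the approximations controls it uniformly in $n$.
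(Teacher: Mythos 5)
Your argument is correct and is essentially the paper's proof in an $\epsilon$-cutoff formulation: both rest on smoothness of $\mu^{\pm}$ off the finite set $A$ together with current convergence and uniform boundedness of $\gamma_n$ to handle $Y-A$, and on the fact (from Condition 2) that $\mu^{\pm}$ can be chosen with no mass on $A$ to kill the contribution near $A$, which the paper phrases via cluster points of $\gamma_n\mu^{\pm}$ being dominated by $\mu^{\pm}$. The only small omissions are the bookkeeping identity $T\wedge(\gamma_n T\wedge T)^o=\gamma_n T\wedge T\wedge T$ (which the paper settles by noting $\gamma_n$ is continuous and $T\wedge T$ has no mass on $I(f^{-1})$), and a mild mischaracterization: $f_*(\varphi)$ does not blow up near $I(f^{-1})$ (it is bounded by $\sup|\varphi|$ there); the genuine issue is only its possible discontinuity across $I(f^{-1})$.
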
  
\begin{proof}
Since $\gamma _n$ is smooth, and $T\wedge T$ has no mass on $I(f^{-1})$, we have $(\gamma _nT\wedge T)^o=\gamma _nT\wedge T$. 

Since $\mu ^{\pm}$ are positive smooth measures on $Y-A$, we have 
\begin{eqnarray*}
\lim _{n\rightarrow\infty}\gamma _n T\wedge T\wedge T=f_*(\varphi )(\mu ^+-\mu ^-),
\end{eqnarray*}
on $Y-A$. 

Since $\mu ^{\pm}$ are positive measures with no mass on $A$, any cluster point of $\gamma _n\mu ^{\pm}$, which is bounded by $\mu ^{\pm}$, also has no mass on $A$. Therefore  we obtain 
\begin{eqnarray*}
\lim _{n\rightarrow\infty}\gamma _n T\wedge T\wedge T=f_*(\varphi )(\mu ^+-\mu ^-),
\end{eqnarray*}
on all of $A$.
\end{proof}

By Lemma \ref{Lemma6}, the wedge intersection $T\wedge f_*(\varphi (\Omega +dd^cu)\wedge f^*(T))$ is well-defined using a continuous property. 

\subsection{The case $T$ is smooth or positive}

We now show that in case $T$ is smooth or positive then the Monge-Ampere in our approach Equation (\ref{Equation0}) and the approach in Equation (\ref{Equation?}) are the same. 

We first consider the case where $T$ is smooth. Then, by Theorem \ref{Theorem1}, the Monge-Ampere operator $MA(f^*(T))=f^*(T)\wedge f^*(T)\wedge f^*(T)$ defined in Equation (\ref{Equation0}) is
\begin{eqnarray*}
<MA(f^*(T)),\varphi >=\lim _{j\rightarrow\infty}\varphi f^*(T) \wedge (\Omega +dd^cu_j)\wedge f^*(T),
\end{eqnarray*}
where $u_j$ is an appropriate sequence of smooth functions converging to $u$. Since $T$ satisfies Condition 1, we have $f^*(T)\wedge f^*(T)=f^*(T\wedge T)$. Then 
\begin{eqnarray*}
\lim _{j\rightarrow\infty}\int _X\varphi f^*(T) \wedge (\Omega +dd^cu_j)\wedge f^*(T)&=&\lim _{j\rightarrow\infty}\int _X\varphi (\Omega +dd^cu_j)\wedge f^*(T\wedge T)\\
&=&\lim _{j\rightarrow\infty}\int _Xf_*(\varphi (\Omega +dd^cu_j))\wedge T\wedge T.
\end{eqnarray*}
Since $T$ is smooth on $Y$ and $\Omega +dd^cu_j\rightarrow f^*(T)$, the limit of the sequence of measures $f_*(\varphi (\Omega +dd^cu_j))\wedge T\wedge T$ is exactly $f_*(\varphi f^*(T))\wedge T\wedge T$. It is also the same as $f_*(\varphi )f_*(f^*(T))\wedge T\wedge T$ $=$ $f_*(\varphi )T\wedge T\wedge T$. Here we use that $f_*f^*=Id$ on positive closed $(1,1)$ and $(2,2)$ currents, by Theorem 1 in \cite{truong1}. Thus the proof for the case $T$ is smooth is completed. 

Now we consider the case $T$ is positive. We have the following 

\begin{theorem}
Assume $T$ is a positive closed $(1,1)$ current in Class ($\mathcal{A}$) and satisfies Condition 1. Then

1) \begin{equation}
f_*(\varphi f^*(T)\wedge f^*(T))=f_*(\varphi )T\wedge T.
\label{Equation5}\end{equation}

2) Assume moreover that $T$ satisfies Conditions 2), 3). We write $T=\alpha +dd^cv$, where $\alpha$ is a smooth closed $(1,1)$ form and $v$ is a quasi-psh function. Let $\alpha +dd^cv_n$ be a good approximation of $T$ in the sense of Definition \ref{Definition3}. Then, for any smooth function $\varphi$ on $X$ we have
\begin{eqnarray*}
\lim _{n\rightarrow\infty}\int _Y(\alpha +dd^cv_n)\wedge f_*(\varphi f^*(T)\wedge f^*(T))=\int _YT\wedge f_*(\varphi f^*(T)\wedge f^*(T)).
\end{eqnarray*}

3) Assumptions are as in 2). Assume moreover that $f^*(T\wedge T)$ has no mass on $I(f)$. We write $f^*(T)=\Omega +dd^cu$, where $\Omega$ is a smooth closed $(1,1)$ form and $u$ is a quasi-psh function. Let $\Omega +dd^cu_j$ be a good approximation of $f^*(T)=\Omega +dd^cu$ in the sense of Definition \ref{Definition3}. Then 
\begin{eqnarray*}
\lim _{n\rightarrow\infty}\int _X\varphi f^*(T)\wedge (\Omega +dd^cu_j)\wedge f^*(T)=\int _YT\wedge f_*(\varphi f^*(T)\wedge f^*(T)).
\end{eqnarray*}

\label{Lemma1}\end{theorem}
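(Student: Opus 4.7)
The plan is to reduce Theorem \ref{Lemma1} to the smooth case treated just before its statement, via good approximations (Definition \ref{Definition3}), combined with the fundamental identity $f^*(T)\wedge f^*(T) = f^*(T\wedge T)$ from \cite{truong} (valid under Condition 1) and the Bedford-Taylor convergence of Theorem \ref{Theorem1}.

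For Part 1, I would take a good approximation $T_n = \alpha + dd^c v_n$ with $v_n$ smooth quasi-psh decreasing to $v$; each $T_n$ is smooth and lies in the cohomology class of $T$, so it inherits Condition 1 from $T$. Then $f^*(T_n)\wedge f^*(T_n) = f^*(T_n\wedge T_n)$, and a projection-formula argument gives
\[
f_*(\varphi f^*(T_n)\wedge f^*(T_n)) = f_*(\varphi f^*(T_n\wedge T_n)) = f_*(\varphi)\cdot T_n\wedge T_n.
\]
The last equality follows because the discrepancy $f_*(\varphi f^*(\mu)) - f_*(\varphi)\mu$ is supported on $I(f^{-1})$, and its possible contributions come from mass of $f^*(\mu)$ on curves $D\subset I(f)$; since $f$ contracts such $D$ to a point and $f_*[D]=0$ by the computation $f_*[D](\eta)=\int_D f^*(\eta)=0$, the discrepancy vanishes. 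Letting $n\to\infty$, the right-hand side converges to $f_*(\varphi)\cdot T\wedge T$ by Bedford-Taylor continuity (potentials of $T$ being continuous outside the finite singular set $A$), and the left-hand side converges to $f_*(\varphi f^*(T)\wedge f^*(T))$ via a two-slot version of Theorem \ref{Theorem1}, obtained by iterating the one-slot statement or through a diagonal argument with uniform mass bounds.

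Once Part 1 is in hand, Part 2 becomes Bedford-Taylor monotone convergence for a wedge of positive currents: Part 1 collapses the decomposition in Lemma \ref{Lemma5} to $f_*(\varphi f^*(T)\wedge f^*(T)) = (f_*(\varphi)\, T\wedge T)^o$, with the singular $\chi_j^\pm[C_j]$ pieces vanishing, so the desired limit reduces to
\[
\int_Y (\alpha + dd^c v_n)\wedge (f_*(\varphi)\, T\wedge T)^o \longrightarrow \int_Y T\wedge (f_*(\varphi)\, T\wedge T)^o,
\]
which is standard Bedford-Taylor convergence for positive closed currents whose potentials are continuous outside the finite set $A$, with Condition 2 ensuring the limit $T\wedge T\wedge T$ has no mass at $A$.

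For Part 3, the extra hypothesis that $f^*(T\wedge T)$ has no mass on $I(f)$ permits a clean application of the projection formula. Using $f^*(T)\wedge f^*(T) = f^*(T\wedge T)$, I would write
\[
\int_X \varphi f^*(T)\wedge (\Omega + dd^c u_j)\wedge f^*(T) = \int_X \varphi(\Omega + dd^c u_j)\wedge f^*(T\wedge T) = \int_Y f_*(\varphi(\Omega + dd^c u_j))\wedge T\wedge T,
\]
the second equality being the adjoint of $f^*$ and $f_*$ for the smooth form $\varphi(\Omega + dd^c u_j)$. Since $\Omega + dd^c u_j \searrow f^*(T)$ as $j\to\infty$, continuity and $f_*f^* = \text{Id}$ (from \cite{truong1}) give $f_*(\varphi(\Omega + dd^c u_j)) \to f_*(\varphi f^*(T)) = f_*(\varphi)\cdot T$, whence
\[
\lim_{j\to\infty}\int_Y f_*(\varphi(\Omega + dd^c u_j))\wedge T\wedge T = \int_Y f_*(\varphi)\, T\wedge T\wedge T = \int_Y T\wedge f_*(\varphi f^*(T)\wedge f^*(T))
\]
by Part 1 and Condition 2. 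The hardest step I expect is the two-slot Bedford-Taylor convergence in Part 1, since Theorem \ref{Theorem1} as stated only provides convergence with one slot fixed; making the simultaneous limit rigorous requires either extending the proof of Theorem \ref{Theorem1} or a careful diagonal argument controlling masses near both $I(f)$ and $I(f^{-1})$.
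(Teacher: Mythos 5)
Your plan diverges from the paper's at the crucial first step, and the divergence creates a real gap. The paper never needs a ``two-slot'' convergence: the current $f_*(\varphi f^*(T)\wedge f^*(T))$ is \emph{defined} (Equations (\ref{Equation3})--(\ref{Equation4})) by approximating the potential in one slot while the other slot stays equal to $f^*(T)$, so in part 1) the paper only has to identify $\lim_j f_*(\varphi(\Omega+dd^cu_j)\wedge f^*(T))$. It does this by choosing $\Omega+dd^cu_j$ a good approximation, using $f_*(\alpha\wedge f^*(T))=f_*(\alpha)\wedge T$ for smooth closed $\alpha$, and deducing from the bound by $f_*(\Omega+dd^cu_j+\epsilon\omega_X)\wedge T$ that $f_*(\varphi(\Omega+dd^cu_j)\wedge f^*(T))$ has no mass on $I(f^{-1})$, hence equals $f_*(\varphi)f_*(\Omega+dd^cu_j)\wedge T$ everywhere. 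Your route instead smooths \emph{both} factors ($T_n=\alpha+dd^cv_n$) and needs $\lim_n f_*(\varphi f^*(T_n)\wedge f^*(T_n))=f_*(\varphi f^*(T)\wedge f^*(T))$; this is exactly the statement Theorem \ref{Theorem1} does not give (its second slot is fixed at $f^*(T)$), and you explicitly leave it as ``iterate the one-slot statement or a diagonal argument'' without proof. Since $f^*(T_n)$ can have unbounded potentials along curves of $I(f)$ (Lemma \ref{Lemma0}), there is no soft reason such a simultaneous limit holds; as written, part 1) is not proved.

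Part 3) has a similar problem: you replace $\int_X\varphi f^*(T)\wedge(\Omega+dd^cu_j)\wedge f^*(T)$ by $\int_Y f_*(\varphi(\Omega+dd^cu_j))\wedge T\wedge T$ via a projection formula for the non-closed smooth form $\varphi(\Omega+dd^cu_j)$ against the singular current $T\wedge T$, and then pass to the limit by asserting $f_*(\varphi(\Omega+dd^cu_j))\to f_*(\varphi)T$ and concluding convergence of the pairings. Weak convergence of $f_*(\varphi(\Omega+dd^cu_j))$ does not allow pairing against $T\wedge T$ (measure coefficients, singular at points of $A$ which may lie in $I(f^{-1})$), and ruling out mass escaping to $I(f^{-1})$ is precisely the content of the paper's proof: it fixes $n,j$, pairs the smooth form $\alpha+dd^cv_n$ with $f_*(\varphi(\Omega+dd^cu_j)\wedge f^*(T))$, splits with $\epsilon\omega_X,\epsilon\omega_Y$ using the good approximations, and bounds cluster points by $\nu_{j,n}$, $\nu_{1,j,n}$, $\nu_{2,j,n}$; the extra hypothesis that $f^*(T\wedge T)$ has no mass on $I(f)$ enters exactly to show that $f_*(\omega_X)\wedge T\wedge T=f_*(\omega_X\wedge f^*(T\wedge T))$ has no mass on $I(f^{-1})$ (via the cohomological mass identity). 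In your sketch this hypothesis is never actually used, which is a sign the limit interchange is unjustified. Part 2) is closer to the paper, but ``standard Bedford--Taylor'' does not apply verbatim since the density $f_*(\varphi)$ is only a bounded difference of quasi-psh functions; the paper's splitting into the positive measures $\mu_{1,n},\mu_2$ bounded by $\nu_{1,n},\nu_2$, together with Condition 2, is what makes that step rigorous.
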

\begin{proof}
1) In this case, $f^*(T)=\Omega +dd^cu$, where $\Omega $ is a smooth closed $(1,1)$ form and $u$ is a quasi-psh function.    Let $u_j$ be a sequence of smooth quasi-psh functions decreasing to $u$. By the monotone convergence in Equation (\ref{Equation4}), we have 
 \begin{eqnarray*}
 f_*(\varphi f^*(T)\wedge f^*(T))=\lim _{n\rightarrow\infty}f_*(\varphi (\Omega +dd^cu_j)\wedge f^*(T)).
 \end{eqnarray*}

By Theorem 1 in \cite{truong1}, $f_*f^*=Id$ for positive closed $(1,1)$ and $(2,2)$ currents. Since $T$ satisfies Condition 1 and is in Class ($\mathcal{A}$), for every smooth closed $(1,1)$ form $\alpha$ we can apply Theorem 1.1 in \cite{truong} to obtain
\begin{eqnarray*}
f_*(\alpha \wedge f^*(T))=f_*(f^*(f_*\alpha )\wedge f^*(T))=f_*(f^*(f_*(\alpha )\wedge T)=f_*(\alpha )\wedge T.
\end{eqnarray*}

We now claim that 
\begin{eqnarray*}
f_*(\varphi (\Omega +dd^cu_j)\wedge f^*(T))=f_*(\varphi )f_*((\Omega +dd^cu_j)\wedge f^*(T))
\end{eqnarray*}
for every $j$. We choose $\Omega +dd^cu_j$ a good approximation for $f^*(T)$, in the sense of Definition \ref{Definition3}. Therefore $\Omega +dd^cu_j+\epsilon \omega _X$ is positive for every $j$, here $\epsilon >0$ is a constant. Since $\varphi $ is bounded, $f_*(\varphi (\Omega +dd^cu_j+\epsilon \omega _X)\wedge f^*(T))$ is bounded by $f_*((\Omega +dd^cu_j+\epsilon \omega _X)\wedge f^*(T))$. The latter, as seen in the last paragraph, is the same as $f_*(\Omega +dd^cu_j+\epsilon \omega _X)\wedge T$. It has no mass on $I(f^{-1})$. Therefore, $f_*(\varphi (\Omega +dd^cu_j)\wedge f^*(T))$ also has no mass on $I(f^{-1})$. Since $f_*(\varphi (\Omega +dd^cu_j)\wedge f^*(T))=f_*(\varphi )f_*(\Omega +dd^cu_j)\wedge T$  on $Y-I(f^{-1})$, we conclude that the equality holds on all of $Y$.

Recall that $A$ is the finite set of points where $T$ is not smooth. Since $\lim _{j\rightarrow\infty}f_*(\varphi )f_*(\Omega +dd^cu_j)$ $=$ $f_*(\varphi )f_*f^*(T)$ $=$ $f_*(\varphi )T$ on $Y$, we conclude that on $Y-A$
\begin{eqnarray*}
\lim _{j\rightarrow\infty}f_*(\varphi )f_*(\Omega +dd^cu_j)\wedge T=f_*(\varphi )T\wedge T.
\end{eqnarray*}
By dimension reason, the above limit also holds on all of $Y$. 

2) We need to show that
\begin{eqnarray*}
\lim _{n\rightarrow\infty}(\alpha +dd^cv_n)\wedge (f_*(\varphi )T\wedge T)^o=(f_*(\varphi )T\wedge T\wedge T)^o.
\end{eqnarray*}
First, since $\alpha +dd^cv_n$ is smooth, we have
\begin{eqnarray*}
(\alpha +dd^cv_n)\wedge (f_*(\varphi )T\wedge T)^o=(f_*(\varphi )(\alpha +dd^cv_n)\wedge T\wedge T)^o.
\end{eqnarray*}
Therefore, it suffices to show that any cluster point of $(f_*(\varphi )(\alpha +dd^cv_n)\wedge T\wedge T)^o$ has no mass on $I(f^{-1})$.

Since $\alpha +dd^cv_n$ is a good approximation of $\alpha +dd^cv$, there is a constant $\epsilon >0$  such that $\alpha +dd^cv_n+\epsilon \omega _Y$ is positive for every $n$. We write
\begin{eqnarray*}
 (f_*(\varphi )(\alpha +dd^cv_n)\wedge T\wedge T)^o=\mu _{1,n}-\mu _{2},
\end{eqnarray*}
Here 
\begin{eqnarray*}
\mu _{1,n}&=&(f_*(\varphi )(\alpha +dd^cv_n+\epsilon \omega _Y)\wedge T\wedge T)^o,\\
\mu _{2}&=&(f_*(\varphi )\epsilon \omega _Y\wedge T\wedge T)^o,
\end{eqnarray*}
are positive measures.

Since $\mu _{1,n}, \mu _{2}$ are bounded by the positive measures
\begin{eqnarray*}
\nu _{1,n}&=&(\alpha +dd^cv_n+\epsilon \omega _Y)\wedge T\wedge T,\\
\nu _{2}&=&\epsilon \omega _Y\wedge T\wedge T,
\end{eqnarray*}
it suffices to show that $\nu _2$ and any cluster point of $\nu _{1,n}$ have no mass on $I(f^{-1})$.

Since $T$ is smooth outside  a finite number of points and $\omega _Y$ is smooth, it is easy to see that $\nu _2$ has no mass on $I(f^{-1})$.

The limit of $\nu _{1,n}$ is $(T+\epsilon )T\wedge T$ also has no mass on $I(f^{-1})$, since $T\wedge T\wedge T$ has no mass on $I(f^{-1})$ by Condition 3). Here, we use that monotone convergence holds, since $T\wedge T$ is smooth outside a finite number of points.  

3) The proof is similar to the proof of 2). We write $T=\alpha +dd^cv$, where $\alpha$ is a smooth closed $(1,1)$ form, and $v$ is a quasi-psh function. Let $\alpha +dd^cv_n$ be a good approximation of $T$ in the sense of Definition \ref{Definition3}. Hence we can assume that $\alpha +dd^cv_n+\epsilon \omega _Y\geq 0$ for all $n$, here $\epsilon$ is a positive constant. 

Then it is easy to see that
\begin{eqnarray*}
\lim _{j\rightarrow\infty}\int _X\varphi f^*(T)\wedge (\Omega +dd^cu_j)\wedge f^*(T))=\lim _{j\rightarrow\infty}\lim _{n\rightarrow\infty}\int _X\varphi f^*(\alpha +dd^cv_n)\wedge (\Omega +dd^cu_j)\wedge f^*(T).
\end{eqnarray*}

For each $n,j$ then as in 1) and previous sections, we can show that
\begin{eqnarray*}
\int _X\varphi f^*(\alpha +dd^cv_n)\wedge (\Omega +dd^cu_j)\wedge f^*(T)=\int _Y(\alpha +dd^cv_n)\wedge f_*(\varphi (\Omega +dd^cu_j)\wedge f^*(T)). 
\end{eqnarray*} 
 
Therefore, to prove 2), it suffices to show that 
\begin{eqnarray*}
\lim _{j\rightarrow\infty}\lim _{n\rightarrow\infty}(\alpha +dd^cv_n)\wedge f_*(\varphi (\Omega +dd^cu_j)\wedge f^*(T))=(f_*(\varphi )T\wedge T\wedge T)^o.
\end{eqnarray*}
 
Since $f:X-I(f)\rightarrow Y-I(f^{-1})$ is a pseudo-isomorphism, the above equality holds on $Y-I(f^{-1})$. Therefore, we only need to show that any cluster point of  
\begin{eqnarray*}
\lim _{j\rightarrow\infty}\lim _{n\rightarrow\infty}(\alpha +dd^cv_n)\wedge f_*(\varphi (\Omega +dd^cu_j)\wedge f^*(T))
\end{eqnarray*}
has no mass on $I(f^{-1})$. 
 
As in the proof of 1), we have that 
\begin{eqnarray*}
(\alpha +dd^cv_n)\wedge f_*(\varphi (\Omega +dd^cu_j)\wedge f^*(T))=(f_*(\varphi )(\alpha +dd^cv_n)\wedge f_*(\Omega +dd^cu_j)\wedge T)^o.
\end{eqnarray*}
We write 
\begin{eqnarray*}
(f_*(\varphi )(\alpha +dd^cv_n)\wedge f_*(\Omega +dd^cu_j)\wedge T)^o=\mu _{j,n}-\mu _{1,j,n}-\mu _{2,j,n},
\end{eqnarray*}
where 
\begin{eqnarray*}
\mu _{j,n}&=&(f_*(\varphi )(\alpha +dd^cv_n+\epsilon \omega _Y)\wedge f_*(\Omega +dd^cu_j+\epsilon \omega _X)\wedge T)^o,\\
\mu _{1,j,n}&=&(f_*(\varphi )\epsilon \omega _Y\wedge f_*(\Omega +dd^cu_j+\epsilon \omega _X)\wedge T)^o,\\
\mu _{2,j,n}&=&(f_*(\varphi )(\alpha +dd^cv_n+\epsilon \omega _Y)\wedge \epsilon f_*(\omega _X)\wedge T)^o.
\end{eqnarray*}

Note that $\mu _{j,n}$, $\mu _{1,j,n}$, $\mu _{2,j,n}$ are positive measures and are  bounded by the following positive measures 
\begin{eqnarray*}
\nu _{j,n}&=&(\alpha +dd^cv_n+\epsilon \omega _Y)\wedge f_*(\Omega +dd^cu_j+\epsilon \omega _X)\wedge T,\\
\nu _{1,j,n}&=&\epsilon \omega _Y\wedge f_*(\Omega +dd^cu_j+\epsilon \omega _X)\wedge T,\\
\nu _{2,j,n}&=&(\alpha +dd^cv_n+\epsilon \omega _Y)\wedge \epsilon f_*(\omega _X)\wedge T.
\end{eqnarray*} 
 
Hence, it suffices to show that the following limits exist and have no mass on $I(f^{-1})$
\begin{eqnarray*}
&&\lim _{j\rightarrow\infty}\lim _{n\rightarrow\infty}\nu _{j,n},\\
&&\lim _{j\rightarrow\infty}\lim _{n\rightarrow\infty}\nu _{1,j,n},\\
&&\lim _{j\rightarrow\infty}\lim _{n\rightarrow\infty}\nu _{2,j,n}.
\end{eqnarray*}

a) The first limit is 
\begin{eqnarray*}
&&\lim _{j\rightarrow\infty}\lim _{n\rightarrow\infty}(\alpha +dd^cv_n+\epsilon \omega _Y)\wedge f_*(\Omega +dd^cu_j+\epsilon \omega _X)\wedge T\\&=&\lim _{j\rightarrow\infty}f_*(\Omega +dd^cu_j+\epsilon \omega _X)\wedge (T+\epsilon \omega _Y)\wedge T\\
&=&(T+f_*(\epsilon \omega _X))\wedge (T+\epsilon \omega _Y)\wedge T.
\end{eqnarray*}
Here we used that $T$ is smooth outside a finite number of points, hence monotone convergence holds. In the resulting limit: 

- The term $T\wedge T\wedge T$ has no mass on $I(f^{-1})$ by Condition 3). 

- The term $T\wedge \omega _Y\wedge T$ has no mass on $I(f^{-1})$ since $T$ is smooth outside a point and $\omega _Y$ is smooth.

- The term $f_*(\omega _X)\wedge \omega _Y\wedge T$ has no mass on $I(f^{-1})$  since $T$ is smooth outside a finite number of points, $f_*(\omega _X)$ has no mass on proper analytic subvarieties, and $\omega _Y$ is smooth. 

- Now we show that the last term $f_*(\omega )\wedge T\wedge T$ has no mass on $I(f^{-1})$. By assumption, $f^*(T\wedge T) $ has no mass on $I(f)$, hence it is a positive current, and the positive measure $\omega _X\wedge f^*(T\wedge T)$ has no mass on $I(f)$. Therefore, the pushforward $f_*(\omega _X\wedge f^*(T\wedge T))$ is well-defined as a positive measure with no mass on $I(f^{-1})$. On $Y-I(f^{-1})$, then $f_*(\omega )\wedge T\wedge T=f_*(\omega _X\wedge f^*(T\wedge T))$. Therefore, $f_*(\omega )\wedge T\wedge T\geq f_*(\omega _X\wedge f^*(T\wedge T))$ on $Y$. Moreover, the masses of the two measures $f_*(\omega )\wedge T\wedge T$ and $f_*(\omega _X\wedge f^*(T\wedge T))$, which can  be computed cohomologously, are the same. We conclude that $f_*(\omega )\wedge T\wedge T= f_*(\omega _X\wedge f^*(T\wedge T))$ on $Y$. Here we use the following properties of pseudo-isomorphisms in dimension $3$: $f^*(\zeta ).f^*(\eta )=f^*(\zeta .\eta )$ (see \cite{bedford-kim}) for $\zeta \in H^{1,1}$ and $\eta \in H^{2,2}$. 

Hence we conclude that the first limit has no mass on $I(f^{-1})$.

b) Using a similar argument, we have that the second and third limits also have no mass on $I(f^{-1})$, as wanted.
\end{proof}


\begin{thebibliography}{TTT}
\bibitem{note}{\bf{Note.}} More relevant references will be added later. 

\bibitem{bassanelli}{G. Bassanelli,} \textit{A cut-off theorem for plurisubharmonic currents,} Forum Math. 6 (1994), no. 5, 567--595.

\bibitem{bedford-kim}{ E. Bedford and K.-H. Kim,} \textit{Dynamics of pseudo-automorphisms of $3$ spaces: periodicity versus positive entropy,} arXiv: 1101.1614.

\bibitem{bedford-taylor}{ E. Bedford and B. A. Taylor,} \textit{The Dirichlet problem for a complex Monge-Ampere equation,} Invent. Math. 37 (1976), no. 1, 1--44.

\bibitem{demailly}{J.-P. Demailly,} \textit{Complex analytic and differential geometry,} Online book, version of Thursday 10 September 2009.

\bibitem{demailly1}{J.-P. Demailly,} \textit{Regularization of closed positive currents and intersection theory,} J. Algebraic Geometry 1. (1992), 361--409.

\bibitem{dinh-sibony1}{T-C Dinh and N. Sibony,} \textit{Regularization of currents and entropy,} Ann. Sci. Ecole Norm. Sup. (4), 37 (2004), no 6, 959--971.

\bibitem{dinh-sibony2}{T-C Dinh and N. Sibony,} \textit{Pull-back of currents by holomorphic maps,} Manuscripta Math. 123 (2007), no 3, 357--371.

\bibitem{fornaess-sibony2}{J. E. Fornaess and N. Sibony,} \textit{Oka's inequality for currents and applications,} Math. Ann. 301 (1995), 399--419. 

\bibitem{meo}{Michel Meo,} \textit{Inverse image of a closed positive current by a surjective analytic map,} (in French),  C. Acad. Sci. Paris Ser. I Math. 322 (1996) , no 12, 1141--1144.

\bibitem{truong1} T.T. Truong, : {\em Some dynamical properties of pseudo-automorphisms in dimension $3$}, accepted in Transactions of the American Mathematical Society. arXiv:1304.4100.

\bibitem{truong0} T.T. Truong, : {\em Pullback of currents by meromorphic maps}, preprint. To appear in Bulletin de Societe Mathematiques de France.


\bibitem{truong} T.T. Truong, : {\em Pseudo-isomorphisms in dimension $3$ and applications to complex Monge-Ampere operators}, arXiv: 1403.5325.


\end{thebibliography}
\end{document}